\documentclass[11pt,twoside,a4paper]{amsart}
\usepackage{amssymb}
\usepackage{graphicx}

\date{\today}

\usepackage[latin1]{inputenc}
\usepackage[T1]{fontenc}

\usepackage{geometry}
\geometry{head=8mm,foot=8mm,bindingoffset=10mm,vcentering=true,twoside=true,textheight=222mm,
textwidth=146mm}

\def\T{T}
\def\k{{\kappa}}

\def\End{{\rm End}}
\def\dbar{\bar\partial}
\def\R{{\mathbf R}}
\def\C{{\mathbf C}}
\def\N{{\mathbf N}} 
\def\Z{{\mathbf Z}}
\def\Im{{\rm Im\, }}

\def\1{\mathbf 1}
\def\m{{\mathfrak m}}

\def\a{{\mathfrak a}}
\def\I{{\mathcal I}}
\def\lcm{\text{lcm}}
\def\Ok{\mathcal{O}}

\DeclareMathOperator{\sgn}{sgn}

\DeclareMathOperator{\codim}{codim}
\DeclareMathOperator{\Vol}{Vol}
\DeclareMathOperator{\res}{res}
\DeclareMathOperator{\Hom}{Hom}

\newtheorem{thm}{Theorem}[section]
\newtheorem{lma}[thm]{Lemma}
\newtheorem{cor}[thm]{Corollary}
\newtheorem{prop}[thm]{Proposition}

\newtheorem{claim}[thm]{Claim}

\theoremstyle{definition}

\theoremstyle{remark}

\newtheorem{preremark}[thm]{Remark}
\newtheorem{preex}[thm]{Example}

\newenvironment{remark}{\begin{preremark}}{\qed\end{preremark}}
\newenvironment{ex}{\begin{preex}}{\qed\end{preex}}

\numberwithin{equation}{section}

\numberwithin{figure}{section}

\begin{document}

\title
[On the fundamental class]
{On a representation of the fundamental class of an ideal due to
  Lejeune-Jalabert}

\date{\today}
\thanks{The author is supported by the Swedish 
  Research Council.}

\author
{Elizabeth Wulcan}

\address{Department of Mathematics\\Chalmers University of Technology and the University of Gothenburg\\S-412 96 Gothenburg\\SWEDEN}

\email{wulcan@chalmers.se}

\subjclass{}

\keywords{}


\maketitle

\begin{abstract}



Lejeune-Jalabert showed that the fundamental class
of a Cohen-Macaulay ideal $\a\subset \Ok_0$ admits a
representation as a residue, constructed from a free resolution of
$\a$, of a certain differential form coming from the resolution.
We give an explicit description of this differential form in the case
where the free resolution is the Scarf resolution of a generic monomial
ideal. As a consequence we get a new proof of
Lejeune-Jalabert's result in this case.

\end{abstract}

\section{Introduction}

In \cite{L-J} Lejeune-Jalabert showed that the fundamental class
of a Cohen-Macaulay ideal $\a$ in the ring of germs of holomorphic
functions $\Ok_0$ at $0\in\C^n$ admits a
representation as a residue, constructed from a free resolution of $\a$,
of a certain differential form coming from the resolution, see
also \cite{L-J2,AL}. 
This representation generalizes the well-known fact that if $\a$ is 
generated by a regular sequence $f=(f_1,\ldots, f_n)$, then 
\begin{equation}\label{groth}
\res_f(df_n\wedge\cdots\wedge df_1)= 
\frac{1}{(2\pi i)^n}\int_\Gamma \frac{df_n\wedge\cdots\wedge df_1}{f_n\cdots f_1}
=\dim_\C(\Ok_0/\a), 
\end{equation}
where $\res_f$ is the \emph{Grothendieck residue} of $f_1,\ldots, f_n$
and $\Gamma$ is the real $n$-cycle defined by $\{|f_j|=\epsilon\}$ for some $\epsilon$ such that $f_j$
are defined in a neighborhood of $\{|f_j|\leq\epsilon\}$ and oriented
by $d(\arg f_1)\wedge \cdots\wedge d(\arg f_n)\geq 0$, see
\cite[Chapter~5.2]{GH}.

\smallskip

We will present a formulation of Lejeune-Jalabert's result in terms of 
currents. 
Recall that 
the \emph{fundamental cycle} of $\a$ 
is the cycle 
\begin{equation*}
[\a]=\sum m_j [Z_j], 
\end{equation*}
where $Z_j$ are the irreducible components of the variety $Z$ of
$\a$, and $m_j$ are the \emph{geometric multiplicities} of $Z_j$
in $Z$, defined as the length of the Artinian ring $\Ok_{Z_j, Z}$,
see, e.g., 
\cite[Chapter~1.5]{F}. In particular, if $Z=\{0\}$, then  
$[\a]=\dim_\C(\Ok_0/\a)[\{0\}]$.

Assume that 
\begin{equation}\label{upplosning}
0\to E_p\stackrel{\varphi_p}{\longrightarrow}\ldots\stackrel{\varphi_2}{\longrightarrow} E_1\stackrel{\varphi_1}{\longrightarrow}
E_0
\end{equation}
is a free resolution of 
$\Ok_0/\a$ of minimal length $p=\codim \a$; here the $E_k$ are free
$\Ok_0$-modules and $E_0\cong \Ok_0$. 
In \cite{AW} together with Andersson we constructed from \eqref{upplosning} a
(residue) current $R$, which has support on $Z$,
takes values in $E_p$, is of bidegree $(0,p)$, and can be thought of
as a current version of Lejeune-Jalabert's residue, cf.\ \cite[Section~6.3]{LW2}.  
Given bases of $E_k$, let $d\varphi_k$ be the $\Hom(E_k,E_{k-1})$-valued $(1,0)$-form with entries 
$(d\varphi_k)_{ij}=d(\varphi_k)_{ij}$ if $(\varphi_k)_{ij}$ are the
entries of $\varphi_k$ and let $d\varphi$ denote the
$E_p^*$-valued $(p,0)$-form
\[
d\varphi:=d\varphi_1\wedge \cdots\wedge d\varphi_p. 
\]
Now, identifying the fundamental cycle $[\a]$ with the current of integration along
$[\a]$, see, e.g., \cite[Chapter~III.2.B]{Dem}, Theorem~1.1 in
\cite{LW2} states that
$[\a]$ admits the factorization 
\begin{equation}\label{faktoriseringen} 
[\a]= \frac{1}{p!(-2\pi i)^p} d\varphi \wedge R. \footnote{For a
  discussion of the sign in \eqref{faktoriseringen}, see Section~2.6 in \cite{LW2}.}
\end{equation}
This should be thought of as a current version of
Lejeune-Jalabert's result in \cite{L-J}; in particular, the differential form
$d\varphi$ is the same form that appears in her paper. 
In fact, using residue theory the factorization
\eqref{faktoriseringen} can be obtained from 
Lejeune-Jalabert's result and vice versa; for a discussion of this, as
well as the formulation of Lejeune-Jalabert's result, we refer to 
Section ~6.3 in \cite{LW2}.
In \cite{LW2} we also give a direct proof of \eqref{faktoriseringen}
that does not rely on
\cite{L-J} and 
that extends to pure-dimensional ideal sheaves.

Assume that $\a$ is generated by a regular
sequence $f=(f_1,\ldots, f_p)$ and let $E_\bullet, \varphi_\bullet$ be the associated Koszul
complex, i.e., let $E$ be a free $\Ok_0$-module of rank $p$ with basis
$e_1, \ldots, e_p$, let $E_k=\Lambda^kE$ with bases $e_{\mathcal
  I}=e_{i_k}\wedge\cdots\wedge e_{i_1}$, and let $\varphi_k$
be the contraction with $\sum f_je_j^*$. Then
\[
R=\dbar\frac{1}{f_1}\wedge\cdots\wedge\dbar\frac{1}{f_p}
e_\emptyset^*\otimes e_{\{1,\ldots, p\}},
\]
where $R^f_{CH}=\dbar(1/f_1)\wedge\cdots\wedge\dbar(1/f_p)$ is the
classical \emph{Coleff-Herrera residue current} 
of $f$,
introduced in \cite{CH}, and $e_\emptyset$ denotes the basis element
of $\Lambda^0E\cong \Ok_0$. 
Moreover $d\varphi= p! df_p\wedge\cdots\wedge df_1 e_{\{1,\ldots, p\}}^*\otimes e_\emptyset$ 
and thus \eqref{faktoriseringen} reads
\begin{equation}\label{CHfaktor} 
[\a]= \frac{1}{(2\pi i)^p} \dbar\frac{1}{f_1}\wedge\cdots\wedge\dbar\frac{1}{f_p}
\wedge df_p\wedge \cdots\wedge df_1. 
\end{equation} 
This factorization of $[\a]$ can be seen as a current version of
\eqref{groth} 
and also as a generalization of the classical Poincar\'e-Lelong
formula 
\begin{equation}\label{plelong}
[f=0]=\frac{1}{2\pi i} \dbar\partial \log |f|^2=\frac{1}{2\pi i}
\dbar\frac{1}{f}\wedge df,
\end{equation}
where  $[f=0]$ is the current
of integration along the zero set of $f$, counted with
multiplicities. 
It appeared already 
in \cite{CH}, and in 
\cite{DP} Demailly and Passare proved an extension to locally complete
intersection ideal sheaves.

\smallskip 

In \cite{LW2} and \cite{L-J}, the factorization \eqref{faktoriseringen} is proved by
comparing $R$ and $d\varphi$ to a residue and differential form,
respectively, 
constructed from a certain Koszul
complex; in \cite{LW2} this is done using a recent comparison formula
for residue currents due to L\"ark\"ang, ~\cite{L}.

To explicitly describe the factors in \eqref{faktoriseringen}, however, seems to
be a delicate problem in general. In this note we compute the form 
$d\varphi$ when $E_\bullet, \varphi_\bullet$ is a certain resolution of a
monomial ideal. 
More precisely, 
let $A$ be the ring $\Ok_0$ of holomorphic germs at the origin in
$\C^n$ with coordinates $z_1,\ldots, z_n$, or let $A$ be the polynomial
ring $\C[z_1,\ldots, z_n]$. 
We then give an explicit description of the form 
\begin{equation}\label{huvud} 
d\varphi= 
\sum_\sigma 
\frac{\partial \varphi_1}{\partial z_{\sigma(1)}}dz_{\sigma(1)}\wedge
\cdots \wedge 
\frac{\partial \varphi_n}{\partial z_{\sigma(n)}}dz_{\sigma(n)}
\end{equation} 
when $E_\bullet, \varphi_\bullet$ is the \emph{Scarf resolution}, introduced in \cite{BPS}, of
an Artinian, i.e., zero-dimensional, \emph{generic} monomial ideal $M$
in $A$, see Section ~\ref{scarfcomplex} for definitions. Here the sum is
over all permutations $\sigma$ of
$\{1,\ldots, n\}$. 
It turns out that each summand in \eqref{huvud} is
a vector of monomials (times $dz_n\wedge\cdots \wedge dz_1$) whose coefficients have a neat description in
terms of the so-called staircase of $M$ and sum up to the geometric multiplicity
of $M$, see Theorem ~\ref{main} below. This can be seen as a far-reaching
generalization of the fact that the coefficient of $d(z^a)$ 
equals $a$, which is the geometric multiplicity of the principal ideal
$(z^a)$, cf.\ Example ~\ref{dimett} below.  Thus, in a sense, the
fundamental class of $M$ is captured already
by the form $d\varphi$. 
In the case of the Scarf resolution we 
recently, together with L\"ark\"ang, \cite{LW}, gave a complete 
description of the current $R$. Combining Theorem ~\ref{main} below with
Theorem ~1.1 in \cite{LW} we obtain a new proof of \eqref{faktoriseringen}
in this case, cf.\ Corollary ~\ref{foljd} below.

Let us describe our result in more detail. 
Let $M$ be an Artinian monomial ideal in $A$. By the \emph{staircase}
$S=S_M$ of $M$ we mean the set 
\begin{equation}\label{trappdef}
S=\overline{\{(x_1,\ldots, x_n)\in\R_{> 0}^n | z_1^{\lfloor x_1 \rfloor}\cdots  z_n^{\lfloor x_n \rfloor} \notin  M\}}\subset\R_{>0}^n. 
\end{equation} 
Here $\lfloor x \rfloor$ denotes the largest integer $\leq x$. 
The name is motivated by the shape of $S$, cf.\ Figures ~5.1, ~5.2,
and ~6.1. 
We will refer to the finitely many maximal elements in $S$, with respect to the
standard partial order on $\R^n$, as \emph{outer
  corners}.

The Scarf resolution $E_\bullet,\varphi_\bullet$ of $M$ is encoded in the \emph{Scarf complex},
$\Delta_M$, 
which is a labeled simplicial complex 
of dimension $n-1$
with one vertex for each minimal monomial generator of $M$ and one top-dimensional
simplex for each outer corner of $S$, see Section
~\ref{scarfcomplex}. 
The rank of $E_k$  equals the number of
$(k-1)$-dimensional simplices in $\Delta_M$. 
In particular, $E_\bullet,\varphi_\bullet$ ends at level $n$ and the rank of
$E_n$ equals the number of outer corners of $S$. 
Thus 
$d\varphi$ is
a vector with one entry for each outer corner of $S$.

For our description of $d\varphi$ we need to introduce certain
partitions of $S$. 
Given a permutation $\sigma$ of
$\{1,\ldots, n\}$ let $\geq_\sigma$ be the lexicographical order
induced by $\sigma$, i.e, $\alpha=(\alpha_1,\ldots, \alpha_n) \geq_\sigma 
\beta=(\beta_1,\ldots, \beta_n)$ if (and only if) 
for some
$1\leq k\leq  n$, $\alpha_{\sigma(\ell)}=\beta_{\sigma(\ell)}$ for
$1\leq \ell\leq k-1$ and
$\alpha_{\sigma(k)}>\beta_{\sigma(k)}$, or $\alpha=\beta$. If
$\alpha\neq \beta$ we write $\alpha>_\sigma\beta$. 
Let $\alpha^1\geq_\sigma \ldots\geq_\sigma \alpha^M$ be the total ordering of the outer corners
induced by $\geq_\sigma$, and define inductively 
\begin{eqnarray*}
S_{\sigma, \alpha^1}& =&\{x\in S \mid x\leq \alpha_1\}\\
& \vdots & \\
S_{\sigma, \alpha^k} & =& \{x\in S\setminus (S_{\sigma,
    \alpha^1}\cup\cdots\cup S_{\sigma, \alpha^{k-1}}) \mid x\leq
\alpha_k\} \\
& \vdots & 
\end{eqnarray*}
For a fixed $\sigma$,
$\{S_{\sigma,\alpha}\}_\alpha$ provides a partition of $S$,
cf.\ Section ~\ref{trappsektion}.

\begin{thm}\label{main}
Let $M$ be an Artinian generic monomial ideal in $A$, and let
\begin{equation}\label{andetag}
0\to E_n\stackrel{\varphi_n}{\longrightarrow}\ldots\stackrel{\varphi_2}{\longrightarrow} E_1\stackrel{\varphi_1}{\longrightarrow}
E_0
\end{equation}
 be the Scarf resolution of $A/M$. 
Then 
\begin{equation}\label{spor}
d_\sigma\varphi :=
\frac{\partial \varphi_1}{\partial z_{\sigma(1)}}dz_{\sigma(1)}\wedge 
\cdots \wedge  
\frac{\partial \varphi_n}{\partial z_{\sigma(n)}}dz_{\sigma(n)}
\end{equation}
has one entry $(d_\sigma\varphi)_\alpha$ for each outer corner $\alpha$ of the
staircase $S$ of $M$ and 
\begin{equation}\label{snor}
(d_\sigma \varphi)_\alpha=\sgn(\alpha)\Vol (S_{\sigma,\alpha})
z^{\alpha-\1} dz, 
\end{equation} 
where 
$\sgn(\alpha)=\pm 1$ comes from the orientation of the Scarf
complex, 
$dz=dz_n\wedge\cdots\wedge
dz_1$, and $z^{\alpha-\1}= 
z_1^{\alpha_1-1} \cdots z_n^{\alpha_n-1}$ if $\alpha=(\alpha_1,\ldots,
\alpha_n)$. 
\end{thm}

The sign $\sgn(\alpha)$ will be specified in Section ~\ref{strut}
below.

Theorem ~5.1 in \cite{LW} asserts that the residue current associated
with the Scarf resolution 
has one entry 
\begin{equation}\label{poor}
R_\alpha=\sgn(\alpha) ~
\dbar \frac{1}{z_1^{\alpha_1}} \wedge\cdots\wedge 
\dbar \frac{1}{z_n^{\alpha_n}} 
\end{equation} 
for each outer corner $\alpha$ of $S$. 
Since $(1/2\pi i)\dbar(1/z^a)\wedge d(z^a)=[z=0]$, cf.\ \eqref{plelong}, we conclude
from \eqref{snor} and \eqref{poor} that 
\begin{equation*}
\frac{1}{(-2\pi i)^n}d_\sigma \varphi 
\wedge R =
\sum_\alpha \Vol (S_{\sigma, \alpha}) [0] = 
\Vol (S) [0]. 
\end{equation*}
Note that $\Vol (S)$ equals the number of monomials that are not in
$M$. Since these monomials form a basis for $A/M$,
$\Vol(S)$ equals the geometric multiplicity $\dim_\C(A/M)$ of $M$. Thus we
get the following version of \eqref{faktoriseringen}.
\begin{cor}\label{foljd} 
Let $M$ and \eqref{andetag} be as in Theorem ~\ref{main} and let $R$
be the associated residue current. Then 
\begin{equation}\label{fluff}
\frac{1}{(-2\pi i)^n}\frac{\partial \varphi_1}{\partial z_{\sigma(1)}}dz_{\sigma(1)}\wedge \cdots \wedge 
\frac{\partial \varphi_n}{\partial z_{\sigma(n)}}dz_{\sigma(n)}\wedge R=
[M]. 
\end{equation} 
\end{cor} 
Summing over all permutations $\sigma$ we get back \eqref{faktoriseringen}. 
In fact, as was recently pointed out to us by Jan Stevens, 
if \eqref{andetag} 
is any free resolution of an Artinian ideal and $R$ is the
associated residue current, then  
the left hand side of \eqref{fluff} is independent of $\sigma$, see
Proposition ~\ref{artinartin}.

\medskip 

The core of the proof of Theorem ~\ref{main} is an alternative
description of the $S_{\sigma, \alpha}$ as certain cuboids, see Lemma
~\ref{ratblock}. Given this description it is fairly straightforward to see that
the volumes of the $S_{\sigma, \alpha}$ are precisely the coefficients of
the monomials in $d_\sigma\varphi$; this is done in Section
\ref{raknautdf}.

\smallskip

We suspect that Theorem
~\ref{main} extends to a more general setting
than the one above. If $M$ is an Artinian non-generic monomial ideal, we
can still construct the partitions $\{S_{\sigma,\alpha}\}_\alpha$. 
The elements $S_{\sigma, \alpha}$ will, however, no longer be cuboids in
general. Also, the computation of $d\varphi$ is more delicate in general. 
In Example
~\ref{amsterdam} we compute $d\varphi$ for a non-generic monomial ideal
for which the \emph{hull resolution}, introduced in \cite{BaS}, is minimal, and show
that Theorem ~\ref{main} holds in this case. On the other hand, in Example
~\ref{motex} 
we consider a monomial ideal for which the hull resolution is not a
minimal resolution and where Theorem ~\ref{main} fails to
hold.

\smallskip 

The paper is organized as follows. In Section ~\ref{trappsektion} and
~\ref{scarfcomplex} we provide some background on staircases of
monomial ideals and the Scarf complex, respectively. 
The proof of Theorem ~\ref{main} occupies Section
~\ref{beviset} and in Section ~\ref{exempel} we illustrate the proof by
some examples. Finally, in Section ~\ref{general} we consider 
resolutions of non-generic Artinian monomial ideals and look at some
examples. We also show that the left hand side of \eqref{fluff} is
independent of $\sigma$ in general.

\subsection*{Acknowledgment}
I want to thank Mats Andersson, Richard L\"ark\"ang, and Jan Stevens 
for valuable discussions. Also, thanks to the referee for many 
valuable comments and suggestions that have 
 helped to
improve the exposition of the paper.

\section{Staircases}\label{trappsektion} 

We let $\geq$ denote the standard partial order on $\R^n$, i.e.,
$a=(a_1,\ldots,a_n)\geq b=(b_1,\ldots, b_n)$ if and only if
$a_\ell\geq b_\ell$ for $\ell=1,\ldots, n$. If $a\geq b$ and $a\neq b$ we
write $a>b$. If $a_\ell>b_\ell$ for all $\ell$
we write $a\succ b$. 
Throughout we let $A=A_n$ denote the ring $\C[z_1,\ldots, z_n]$ or the
ring $\Ok_0$ of holomorphic germs at $0\in\C^n_{z_1,\ldots, z_n}$. 
For $a=(a_1,\ldots,a_n)\in \N^n$, where $\N=0,1,\ldots$, we use the shorthand notation
$z^a$ for the monomial $z_1^{a_1}\cdots z_n^{a_n}$ in $A$. For a general
reference on (resolutions of) monomial ideals, see, e.g., \cite{MS}. 

Unless otherwise stated $M$ will be a monomial ideal in $A$, i.e., an
ideal generated by
monomials, and $S$ will be the staircase of $M$ as defined in 
\eqref{trappdef}. 
Note that $M$ is Artinian if and only if there are generators of the
form 
$z_i^{a_i}$, $a_i>0$, for $i=1,\ldots, n$, which is equivalent to that $S\subset
\{x\in \R^n_{>0}\mid x_i\leq a_i, i=1,\ldots, n\}$ for some $a_i$,
which in turn is equivalent to that $S$ is
bounded. 
Recall that the closure in \eqref{trappdef} is taken
in $\R^n_{>0}$; we will however often consider $S$ as a subset of
$\R^n$. 
As in the introduction we will refer to the maximal elements of $S$ as
\emph{outer corners}. The minimal elements of
$\overline{\R^n_{>0}\setminus S}\subset\R^n$ 
we will call \emph{inner corners}. 
Unless otherwise mentioned the closure $\overline A$ of a set $A$ is
taken in $\R^n$.

One can check that for any monomial ideal $M$ there is a unique
minimal set of exponents $B\subset\N^n$ such that the monomials $\{z^a\}_{a\in
  B}$ generate $M$. We refer to these monomials as \emph{minimal monomial
  generators} of $M$. 
Moreover
\begin{equation}\label{complement}
S=\R^n_{>0}\setminus \bigcup_{a\in B} (a +\R_{> 0}^n). 
\end{equation}
In particular, the inner corners of $S$ are precisely the elements in $B$.

Dually, $M$ can be described as an intersection of so-called
\emph{irreducible} monomial ideals, i.e., ideals 
generated by powers of variables; such an ideal can be described as
$\m^\alpha:=(z_i^{\alpha_i}|\alpha_i\geq 1)$, where
$\alpha=(\alpha_1,\ldots, \alpha_n)\in\N^n$ (more generally
an ideal is irreducible if it cannot be written as a non-trivial
intersection of two ideals).  
For every monomial ideal $M$ there is a unique minimal set $C\subset \N^n$ 
such that 
\begin{equation}\label{irreducible}
M=\bigcap_{\alpha\in C}\m^\alpha,  
\end{equation}
see, e.g., \cite[Theorem~5.27]{MS}. 
The ideal $M$ is Artinian if and only if each $\alpha\in C$
satisfies $\alpha\succ 0$ (i.e., $\alpha_\ell >0$ for each $\ell$). 
If $\alpha\succ 0$, then note that a monomial $z^b\not\in\m^\alpha$ if and
only if $b\prec \alpha$. 
It follows that 
\begin{equation}\label{alliance}
S=\bigcup_{\alpha\in C} \{x\in \R^n_{> 0}\mid x\leq \alpha\}. 
\end{equation}
In particular, the outer corners of $S$ are precisely the elements in $C$. 
If $M$ is not Artinian, then $S$ is not bounded in $\R^n$ and 
the representation \eqref{alliance} fails to hold. 
Note that \eqref{alliance} guarantees that for a fixed $\sigma$,
$\{S_{\sigma,\alpha}\}_\alpha$, as defined in the introduction, is a
partition of $S$.

\medskip

Inspired by \eqref{complement} we will call any set of this form a
staircase: 
Let $H$ be an affine subspace of $\R^n$ of the form 
\[
H=\{x_{\ell_1}=a_1,\ldots, x_{\ell_k}=a_k\}
\]
where $a_j\in\Z$. 
For $a\in \Z^n\cap H$ let $U_a=\{x\in H \mid x\succ a\}$. Note that $U_0^{\Z^n}$
is just the first (open) orthant $\R^n_{>0}$ in $\R^n$. 
We say that a set $S\subset H$ is a
\emph{staircase} if it is of the form 
\[
S=U_{a^0}\setminus \bigcup_{j=1}^s U_{a^j}
\]
for some $a^0,a^1, \ldots, a^s\in \Z^n\cap H$. We say that $a_0$ is the
\emph{origin} of $S$ and if $a^1,\ldots, a^s$ are chosen so $a^1,
\dots, a^s \geq a^0$ and $a_j\not\leq a_k$ for $j\neq k$, $1\leq
j,k\leq s$ we call them the \emph{inner
  corners} of $S$. We call the maximal elements of $S$ the
\emph{outer corners} of $S$. 
Since $\Z^n\cap H$ is a lattice, 
the outer corners are in ~$\Z^n\cap H$. 
Note that $S$ is a closed subset of $U_{a^0}$.

If $\pi$ is the projection $\R^n\to \R^{n-k}$ that maps $(x_1,\ldots,
x_n)$ to $(x_{j_1},\ldots, x_{j_{n-k}})$ if $\{j_1,\ldots,
j_{n-k}\}=\{1,\ldots, n\}\setminus \{\ell_1,\ldots, \ell_k\}$ and $\rho:\R^n\to \R^{n-k}$ is the affine map
$\rho(x)=\pi(x-a^0)$, let 
$M(S)=M_\rho(S)$ be the monomial ideal in $A_k$ that is generated
by $z^{\rho(a^j)}$, where $a^j$ are
the inner corners of $S$. Then the staircase of $M(S)$ equals $\rho(S)$.

For $\alpha\in L$, let $V_\alpha =V_\alpha^L =\{x\in H \mid x \leq \alpha\}$. If $M(S)$
is Artinian, then $S$ admits a representation analogous to
\eqref{alliance}, 
\begin{equation}\label{flinga}
S=U_{a^0} \cap \bigcup_\alpha V_\alpha,
\end{equation} 
where the union is taken over all outer corners of $S$.

Note that any set $S$ of the form \eqref{flinga} with $a^0, \alpha\in L$
is a staircase; indeed since $L$ is a lattice the minimal elements of
$\overline{U_{a^0}\setminus S}$
are in $L$.

\section{The Scarf complex}\label{scarfcomplex} 

For $a,b\in \R^n$, we will denote by $a\vee b$ the \emph{join} of $a$
and $b$, i.e., the unique $c$ such that $c\geq a, b$, and $c\leq d$ for
all $d\geq a, b$.

Let $M$ be an Artinian monomial ideal in $A$, with minimal monomial
generators $m_1=z^{a^1},\ldots, m_r=z^{a^r}$. 
The  \emph{Scarf complex} ~$\Delta=\Delta_{M}$ of ~$M$ was introduced by
Bayer-Peeva-Sturmfels, \cite{BPS}, based on previous work by
H. Scarf. 
It is the collection of
subsets $\I=\{i_1,\ldots, i_k\}\subset\{1,\ldots,r\}$ whose corresponding least common
multiple ~$m_{\I}:=\lcm(m_{i_1},\ldots, m_{i_k})=z^{a^{i_1}\vee\cdots
  \vee a^{i_k}}$ is unique, that is,
\[
\Delta=\{\I\subset\{1,\ldots,r\}|m_{\I}=m_{\I'}\Rightarrow \I=\I'\}.
\]
Clearly the vertices of $\Delta$ are the
minimal monomial generators of $M$, i.e., the inner corners of $S_M$. 
One can prove that the Scarf complex is a simplicial complex of
dimension at most ~$n-1$. We let $\Delta(k)$ denote the set of 
simplices in $\Delta$ with $k$ vertices, i.e., of dimension $k-1$. 
Moreover we label the faces $\I\subset\Delta$ by the monomials
$m_{\I}$. 
We will sometimes be sloppy and identify the faces in $\Delta$ with
their labels or exponents of the labels and write $m_{\I}$ or
$\alpha$ for the face with label
$m_{\I}=z^\alpha$ and 
$\{z^{a^{i_1}},\ldots,
z^{a^{i_k}}\}$ or $\{a^{i_1},\ldots, a^{i_k}\}$ for $\I=\{i_1,\ldots, i_k\}$.

The ideal ~$M$ is said to be \emph{generic} in the sense of \cite{BPS,MSY} if whenever two distinct
minimal generators ~$m_i$ and ~$m_j$ have the same positive degree in
some variable, then there is a third generator ~$m_k$ that
\emph{strictly divides} $m_{\{i,j\}}$, which means that ~$m_k$ divides
~$m_{\{i,j\}}/z_\ell$ for all variables ~$z_\ell$ dividing ~$m_{\{i,j\}}$. In particular, ~$M$ is generic if no two generators have the same positive degree in any variable.

If $M$ is generic, then  $\Delta$ has precisely
dimension $n-1$; it is a regular 
triangulation of the $(n-1)$-dimensional simplex, see
\cite[Corollary~5.5]{BPS}.  
The (labels of the) top-dimensional faces of $\Delta$ are
precisely the exponents $\alpha$ in a minimal irreducible decomposition
\eqref{irreducible} of $M$, i.e., the outer corners of $S_M$, see
\cite[Theorem~3.7]{BPS}.  

For $k=0,\ldots, n$, let $E_k$ be the free
$A$-module with basis $\{e_{\I}\}_{\I\in \Delta(k)}$ and let the
  differential $\varphi_k:E_k\to E_{k-1}$ be defined by 
\begin{equation}\label{blabla}
\varphi_k: 
e_{\I}\mapsto \sum_{j=1}^k
(-1)^{j-1}~\frac{m_{\I}}{m_{{\I}_j}}e_{{\I}_j}, 
\end{equation} 
where ${\I}_j$ denotes $\{i_1,\ldots, i_{j-1},i_{j+1}, \ldots, i_k\}$ if
${\I}=\{i_1, \ldots, i_k\}$. 
Then the complex $E_\bullet, \varphi_\bullet$ is exact and thus gives
a free resolution of the cokernel of $\varphi_0$, which with the
identification $E_0=A$ equals $A/M$, see \cite[Theorem~3.2]{BPS}. In fact, this
so-called \emph{Scarf resolution} is a minimal resolution of $A/M$,
i.e., for each ~$k$, ~$\varphi_k$ maps a basis
of ~$E_k$ to a minimal set of generators of $\Im \varphi_k$, see,
e.g., \cite[Corollary~1.5]{E2}. 
Originally, in \cite{BPS}, the situation $A=\C[z_1,\ldots, z_n]$ was
considered. 
However, since $\Ok_0$ is flat over $\C[z_1,\ldots, z_n]$,
see, e.g.,  \cite[Theorem~13.3.5]{Ta}, the complex
$E_\bullet,\varphi_\bullet$ is exact for $A=\Ok_0$ if and only if it is
exact for $A=\C[z_1,\ldots, z_n]$.

\subsection{The sign $\sgn(\alpha)$}\label{strut}
Let ${\I}=\{i_1,\ldots, i_n\}$ be a top-dimensional simplex in
$\Delta$ with label $\alpha$. Then there
is a unique permutation $\eta=\eta(\alpha)$ of $\{1,\ldots, n\}$ such that for
$1\leq \ell\leq n$ there 
is a unique vertex $i_{\eta(\ell)}$ of ${\I}$ such that 
$\alpha_\ell=a^{i_{\eta(\ell)}}_\ell$; we will refer to this vertex as the
$x_\ell$\emph{-vertex} of ${\I}$. To see this, first of all, since
$\alpha=\lcm(a^i)$, $a^i_\ell\leq \alpha_\ell$ and therefore there must be at
least one vertex $i$ of $\I$ such that $a^i_\ell=\alpha_\ell$. Assume that $i$ and $j$ are
vertices of ${\I}$ such that  $a^i_\ell=a^j_\ell=\alpha_\ell$. Then, since $M$ is
generic, there is a generator $z^b$ of $M$ that strictly
divides $z^{a^i\vee a^j}$. But then $a^i\vee a^j= a^i \vee a^j \vee b$ and
so $\{i,j\}$ is not in $\Delta$, which contradicts that $i$ and $j$
are both vertices of $\I$.

 We let $\sgn(\alpha)$ denote the sign of the permutation $\eta$. This is the sign
  that appears in \eqref{snor} in Theorem ~\ref{main} as well as in
  \eqref{poor}. 
We should remark
that we use a different sign convention in this paper than in
\cite{LW, LW2}, which corresponds 
to a different orientation of the $(n-1)$-simplex or,  
equivalently, to a different choice of
bases for the modules $E_k$, cf.\ \cite{AW, LW2}.

\subsection{The subcomplex $\Delta_{\sigma, a^1,\ldots, a^k}$}\label{delkomplex}

Given a permutation $\sigma$ of $\{1,\ldots, n\}$, and 
vertices $a^1,\ldots, a^k$ of $\Delta$, let $\Delta_{\sigma, a^1,
  \ldots, a^k}$ be the (possibly empty) subcomplex of $\Delta$, with
top-dimensional simplices $\alpha$ that satisfy
$\alpha_{\sigma(\ell)}=a^\ell_{\sigma(\ell)}$ for $\ell=1,\ldots, k$. 
In other words, the top-dimensional
simplices in $\Delta_{\sigma, a^1, \ldots, a^k}$ are the ones that have
$a^\ell$ as $x_{\sigma(\ell)}$-vertex for $\ell=1,\ldots, k$. 

Note that for each choice of permutation $\sigma$, $k\in\{1,\ldots,n\}$, and $\alpha\in\Delta(n)$ there is a unique
sequence $a^1,\ldots, a^k$ such that $\alpha\in \Delta_{\sigma, a^1, \ldots,
  a^k}$. 
Moreover, $\Delta_{\sigma, a^1, \ldots, a^{n}}$ is the unique simplex
in $\Delta(n)$ that satisfies $a^\ell_{\sigma(\ell)} =\alpha_{\sigma(\ell)}$ for
$\ell=1,\ldots, n$ if $\alpha$ is the label of $\Delta_{\sigma, a^1, \ldots, a^{n}}$.

We will write $\Delta_{\sigma, a^1,\ldots, a^k}^*$ for the subcomplex
of $\Delta_{\sigma, a^1,\ldots, a^k}$ consisting
of all faces in $\Delta_{\sigma, a^1,\ldots, a^k}$ that do not contain
$a^1,\ldots, a^{k-1}$, or $a^k$. 
Note that since $\Delta_{\sigma, a^1,\ldots, a^k}$ is simplicial, $\{b^1,\ldots, b^\ell\}$ is a
face of $\Delta_{\sigma, a^1,\ldots, a^k}^*$ if and only $\{a^1,\ldots, a^k, b^1,\ldots, b^\ell\}$ is a face of
$\Delta_{\sigma, a^1,\ldots, a^k}$.

\section{Proof of Theorem ~\ref{main}}\label{beviset} 
To prove the theorem we will first give an alternative description of
the $S_{\sigma,\alpha}$ as certain cuboids. Throughout this section we will assume that $E_\bullet,\varphi_\bullet$
is the Scarf resolution of a generic Artinian monomial ideal $M$ and 
we will use the notation from above.

\begin{lma}\label{ratblock}  
Assume that $\alpha$ is the label of the face ${\I}=\{i_1,\ldots,
i_n\}\in \Delta(n)$. Let $\eta$ be the permutation of $\{1,\ldots,
n\}$ associated with $\I$ as in Section ~\ref{strut}, and
set $\tau=\eta\circ \sigma$. 

Then $S_{\sigma,\alpha}$ is a cuboid with side lengths 
\[
a^{i_{\tau(1)}}_{\sigma(1)}, \big (a^{i_{\tau(1)}}\vee a^{i_{\tau(2)}}-a^{i_{\tau(1)}} \big )_{\sigma(2)}, \ldots,  \big (a^{i_{\tau(1)}}\vee\cdots\vee
a^{i_{\tau(n)}}-a^{i_{\tau(1)}}\vee\cdots\vee a^{i_{\tau({n-1})}} \big )_{\sigma(n)}.
\]
\end{lma}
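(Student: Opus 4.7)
The plan is to identify $S_{\sigma,\alpha}$ with an explicit cuboid built from the Scarf vertices of $\alpha$. Set $b^\ell := a^{i_{\tau(\ell)}}$ for the $x_{\sigma(\ell)}$-vertex of $\mathcal{I}$ and $\beta^k := b^1 \vee \cdots \vee b^k$ with $\beta^0 := 0$. By the uniqueness of the $x_{\sigma(\ell)}$-vertex (already argued in the excerpt using genericity), $b^j_{\sigma(\ell)} < \alpha_{\sigma(\ell)}$ for $j \neq \ell$ and $b^\ell_{\sigma(\ell)} = \alpha_{\sigma(\ell)}$, so $\beta^k_{\sigma(\ell)} = \alpha_{\sigma(\ell)}$ for $\ell \leq k$ and $\beta^n = \alpha$. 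The $k$-th side length listed in the lemma then telescopes to $\alpha_{\sigma(k)} - \beta^{k-1}_{\sigma(k)}$, and the statement reduces to the identity
\begin{equation*}
S_{\sigma,\alpha} = C_\alpha := \{x \in \R^n_{>0} : \beta^{k-1}_{\sigma(k)} < x_{\sigma(k)} \leq \alpha_{\sigma(k)}, \ k = 1, \ldots, n\}.
\end{equation*}

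For the inclusion $C_\alpha \subseteq S_{\sigma,\alpha}$ I would argue directly. Trivially $C_\alpha \subseteq \{y \leq \alpha\} \subseteq S$ by \eqref{alliance}, and it remains to check that no $x \in C_\alpha$ satisfies $x \leq \alpha'$ for any outer corner $\alpha' >_\sigma \alpha$. Writing $j$ for the smallest index with $\alpha'_{\sigma(j)} > \alpha_{\sigma(j)}$, the inner corner $b^j$ is a minimal generator of $M \subseteq \m^{\alpha'}$, so some coordinate $\mu$ satisfies $b^j_\mu \geq \alpha'_\mu$. Uniqueness of $x_{\sigma(\ell)}$-vertices rules out $\mu = \sigma(\ell)$ for $\ell \leq j$ (since $b^j_{\sigma(\ell)} < \alpha_{\sigma(\ell)} = \alpha'_{\sigma(\ell)}$ for $\ell < j$ and $b^j_{\sigma(j)} = \alpha_{\sigma(j)} < \alpha'_{\sigma(j)}$); hence $\mu = \sigma(k)$ for some $k > j$, and
\begin{equation*}
x_{\sigma(k)} > \beta^{k-1}_{\sigma(k)} \geq b^j_{\sigma(k)} \geq \alpha'_{\sigma(k)}
\end{equation*}
contradicts $x \leq \alpha'$. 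In particular $C_\alpha \cap \{y \leq \alpha'\} = \emptyset$ whenever $\alpha' >_\sigma \alpha$, so the cuboids $\{C_\alpha\}$ are pairwise disjoint.

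Combined with the partition $S = \bigsqcup_\alpha S_{\sigma,\alpha}$ from Section~\ref{trappsektion}, the reverse inclusion $S_{\sigma,\alpha} \subseteq C_\alpha$ reduces to showing $\bigcup_\alpha C_\alpha = S$. I would pick $x \in S$, let $\alpha$ be the $\sigma$-lex-maximal outer corner with $\alpha \geq x$, and prove $x \in C_\alpha$, that is, $x_{\sigma(k)} > \beta^{k-1}_{\sigma(k)}$ for all $k \geq 2$. If, to the contrary, $x_{\sigma(k)} \leq b^{j_0}_{\sigma(k)}$ for some $k$ and some $j_0 < k$, one has to produce an outer corner $\alpha' >_\sigma \alpha$ with $\alpha' \geq x$, contradicting the choice of $\alpha$. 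Such $\alpha'$ is to be extracted from the subcomplex $\Delta_{\sigma, b^1, \ldots, b^{j_0}}$ of Section~\ref{delkomplex}: by the uniqueness argument, its top-dimensional simplices all share the vertices $b^1, \ldots, b^{j_0}$ and their labels agree with $\alpha$ on the first $j_0$ ``$\sigma$-coordinates''. Stripping those vertices and the coordinates $\sigma(1), \ldots, \sigma(j_0)$ should realise this subcomplex as the top-dimensional part of the Scarf complex of a generic Artinian monomial ideal in $n - j_0$ variables, to which the lemma applies by induction on $n$.

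The hardest step is this structural identification, i.e.\ checking that the restricted ideal (obtained by fixing $\alpha$'s first $j_0$ $\sigma$-coordinates) is still generic and Artinian, and that its Scarf complex coincides with the reduced version of $\Delta_{\sigma, b^1, \ldots, b^{j_0}}$. Granted this, the inductive step furnishes in the reduced problem a lex-larger outer corner still dominating the projection of $x$; lifting provides the required $\alpha'$ and completes the proof.
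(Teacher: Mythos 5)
Your proof is organized around the right object: the cuboid you call $C_\alpha$ is exactly the paper's $P_{a^1,\ldots,a^n}$, and the telescoping computation of its side lengths is correct. Your direct argument for the inclusion $C_\alpha\subseteq S_{\sigma,\alpha}$ is also sound and, in fact, cleaner than the route the paper takes for its analogue: using $z^{b^j}\in M\subseteq\m^{\alpha'}$ to produce a coordinate $\mu$ with $b^j_\mu\geq\alpha'_\mu$, ruling out $\mu=\sigma(\ell)$ for $\ell\leq j$ by uniqueness of the $x_{\sigma(\ell)}$-vertices, and then deriving $x_{\sigma(k)}>\alpha'_{\sigma(k)}$ is a tidy, self-contained derivation.

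The gap is in the coverage claim $\bigcup_\alpha C_\alpha=S$. You assume $x\notin C_\alpha$ with $\alpha$ the $\sigma$-lex-maximal outer corner $\geq x$, pick $j_0<k$ with $x_{\sigma(k)}\leq b^{j_0}_{\sigma(k)}$, and propose to extract a lex-larger $\alpha'\geq x$ from the subcomplex $\Delta_{\sigma,b^1,\ldots,b^{j_0}}$ by an induction on $n$. But that subcomplex is the wrong place to look: its top-dimensional simplices all contain $b^1,\ldots,b^{j_0}$ and in particular share all of $\alpha$'s first $j_0$ $\sigma$-coordinates, yet they need not contain any face other than $\alpha$ itself. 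Already for $n=2$, $\sigma=\mathrm{id}$, with $\alpha=\alpha^j=(a_j,b_{j+1})$ and $b^1=(a_j,b_j)$, the failure $x_2\leq b_j$ forces $j_0=1$, $k=2$; but the only outer corner whose first coordinate equals $a_j$ is $\alpha^j$, so $\Delta_{\sigma,b^1}$ contains no $\alpha'\neq\alpha$. The lex-larger corner that actually witnesses the contradiction is $\alpha^{j-1}=(a_{j-1},b_j)$, which lies outside $\Delta_{\sigma,b^1}$. Relatedly, the reduction you sketch does not produce a point of the reduced staircase: for $\rho_{j_0}^{-1}(x')$ to lie in the sliced staircase $R_{\sigma,b^1,\ldots,b^{j_0}}$ one needs $x_{\sigma(\ell)}>\beta^{j_0}_{\sigma(\ell)}$ for all $\ell>j_0$, and that is exactly what fails at $\ell=k$. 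So the inductive step, as formulated, has nothing to apply the hypothesis to and cannot furnish the needed $\alpha'$.

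By contrast, the paper attacks coverage from the other side. It builds, inductively on the number of fixed vertices $k$, auxiliary ``slice'' staircases $R_{a^1,\ldots,a^k}$ and their column preimages $P_{a^1,\ldots,a^k}$, and proves at each level (Claims~\ref{lila}--\ref{fredag}) that the sliced object is again the staircase of a generic Artinian ideal whose Scarf complex is the relative link $\Delta^*_{\sigma,a^1,\ldots,a^k}$. Disjointness and coverage (Claims~\ref{disjunkta}, \ref{finnsen}) are then established at each level and propagate by induction, yielding that $\{P_{a^1,\ldots,a^n}\}$ is a partition of $S$ into cuboids; the final identification $S_{\sigma,\alpha}=P_{a^1,\ldots,a^n}$ follows from $S_{\sigma,\alpha}\subseteq P_{a^1,\ldots,a^n}$ (Claim~\ref{flerahorn}) and the two-partitions argument. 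If you want to keep your lex-maximal-corner strategy, the natural fix is to look inside $\Delta_{\sigma,b^1,\ldots,b^{j_0-1}}$ rather than $\Delta_{\sigma,b^1,\ldots,b^{j_0}}$ and to locate the required $\alpha'$ with $\alpha'_{\sigma(j_0)}>\alpha_{\sigma(j_0)}$ there; but showing such a simplex exists and dominates $x$ again requires essentially the slice-staircase machinery (one must know that the link $\Delta^*_{\sigma,b^1,\ldots,b^{j_0-1}}$ is a Scarf complex and that $x$ lands in the appropriate slice), so there is no genuine shortcut past the paper's Claims~\ref{lila}--\ref{fredag}.
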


In particular, 
\[
\Vol (S_{\sigma,\alpha})=
a^{i_{\tau(1)}}_{\sigma(1)}\times  \big (a^{i_{\tau(1)}}\vee a^{i_{\tau(2)}}-a^{i_{\tau(1)}} \big )_{\sigma(2)}\times \cdots \times  \big (a^{i_{\tau(1)}}\vee\cdots\vee
a^{i_{\tau(n)}}-a^{i_{\tau(1)}}\vee\cdots\vee a^{i_{\tau({n-1})}} \big )_{\sigma(n)}.
\]

\subsection{Proof of Lemma ~\ref{ratblock}}\label{ratblocksbevis}

To prove the lemma, let us first assume that $\sigma$ 
is the identity permutation and write
$S_\alpha=S_{\sigma,\alpha}$, $\Delta_{a^1,\ldots,
  a^k}=\Delta_{\sigma, a^1,\ldots, a^k}$ (so that $\Delta_{a^1,\ldots, a^k}$ is the subcomplex of $\Delta$ whose
top-dimensional simplices $\alpha$ satisfy $\alpha_\ell=a^\ell_\ell$
for $\ell=1,\ldots, k$), and  $\Delta_{a^1,\ldots,
  a^k}^*=\Delta_{\sigma, a^1,\ldots, a^k}^*$.

We will decompose $S$ in a seemingly different way. 
First we will construct certain lower-dimensional staircases with
corners in 
$\Delta$. 
Given an inner corner $a=(a_1,\ldots, a_n)$ of $S$ 
let 
\begin{equation}\label{krut}
\T_a=\{x\in S \mid x_1=a_1, x_\ell > a_\ell, \ell=2,\ldots, n\}. 
\end{equation}
Note that $\T_a$ is contained in the boundary $\partial S$ of $S$. 
Let $H_a$ be the hyperplane $\{x_1=a_1\}$, and let $U_a$ be defined as
in Section \ref{trappsektion}. Then note that $\T_a= U_a\cap S \subset 
H_a \cap S$.

\begin{claim}\label{lila}
Let $a$ be an inner corner of $S$. 
Then $\T_a=\emptyset$ if and only if $a_1=0$. If $T_a$ is non-empty, then it is a staircase in the hyperplane $H_a$ with origin
$a$. The outer corners of $\T_a$ are the top-dimensional faces of
$\Delta_a$. The inner corners are the lattice points  $a\vee b$,
where $b$ is a vertex in $\Delta_a^*$. 
\end{claim}

\begin{proof}
If $a_1=0$, then $H_a\cap S=\emptyset$, and thus $T_a$ is empty. 
In general, note that $T_a$ is empty exactly if $a+(0,1,\ldots, 1)\notin S$. If
$a_1>0$, so that  $a+(0,1,\ldots, 1)\in \R^n_{>0}$, 
this means that there is an inner
corner $c$ of $S$ such that $c\prec a+(0,1,\ldots, 1)$. In particular, 
$c\leq a$, which contradicts that $a$ is an inner corner. Thus
$T_a\neq \emptyset$ if $a_1>0$. 

\smallskip

Assume that $T_a$ is non-empty and that $\beta=(\beta_1,\ldots, \beta_n)$ is maximal in
$\T_a$. Since $\T_a\subset H_a$, $\beta_1=a_1$. Moreover, since $S$ is
of the form \eqref{alliance}, 
there is a maximal $\gamma\in S$ such that
$\gamma\geq\beta$. 
By the definition of
$\T_a$, $\gamma_\ell\geq \beta_\ell > a_\ell$ for $\ell=2,\ldots, n$ and
thus if $\gamma_1>a_1$, then $\gamma\succ a$, which contradicts that $\gamma\in S$. Hence  
$\gamma_1=a_1$ and, since $\gamma_\ell > a_\ell$ for
$\ell=2,\ldots, n$, $\gamma\in \T_a$. Since $\beta$ is maximal in $\T_a$,
$\gamma=\beta$, which  means that, in fact, $\beta$ is maximal in $S$
and thus $\beta\in\Delta(n)$. Since 
$\beta_1=a_1$, $\beta\in\Delta_a(n)$ by the definition of $\Delta_a$.

On the other hand, if $\beta\in \Delta_a(n)$, then $\beta$ is maximal
in $S$ and contained in $\T_a\subset S$, and thus it is maximal in $\T_a$. We
conclude that the maximal elements in $\T_a$ are the
top-dimensional faces of $\Delta_a$.

Since $S$ is of the form \eqref{alliance}, 
\begin{equation*}
\T_a=U_a 
\cap \bigcup_{\beta\in \Delta_a(n)} \{x\mid x\leq \beta\}, 
\end{equation*} 
which in light of \eqref{flinga} means that $\T_a$ is a staircase in
$H_a$ with origin $a$ and outer corners $\Delta_a(n)$.

\smallskip

It remains to describe the inner corners of $\T_a$. 
Assume that $\beta=(a_1,\beta_2,\ldots, \beta_n)$ is an inner corner
of $\T_a$,
i.e., $\beta$ is minimal in $\overline{U_a\setminus S}$. 
This means that any $\tilde\beta$, such that
$\tilde\beta_\ell>\beta_\ell$ for $\ell=2,\ldots, n$, is not contained
in $S$, which implies that there is an inner corner $b$ of $S$ such
that $b\prec \tilde \beta$ for any such $\tilde\beta$. In particular, $b\leq \beta$ and
$b_1<a_1$. To conclude, there is an inner corner $b\neq a$ of $S$ such
that $b\leq \beta$. 
Now $a\vee b\in \overline{U_a\setminus S}$, since $a\vee b \geq a$,
$(a\vee b)_1=a_1$, and $z^{a\vee b}\in M$. 
Moreover, $a\vee b\leq a\vee \beta \leq \beta$, since $b\leq \beta$, and, since
$\beta$ by assumption is minimal in $\overline{U_a\setminus S}$, it
follows that 
$\beta=a\vee b$. 
Now, since $\T_a$ is a staircase there is a maximal
$\alpha\in\Delta_a(n)$ such that $b\leq \beta \leq \alpha$. It follows
that $b$ is a vertex of $\alpha$, i.e., $b\in \Delta_a^*(1)$.

Conversely, pick $b\in\Delta_a^*(1)$ and let $\beta=a\vee b$. Since
$a$ and $b$ are inner corners of $S$, $z^\beta\in M$ and thus 
$\beta\in\overline{\R^n_{>0}\setminus S}$. 
Moreover, since $b\in \Delta_a$, $b_1\leq a_1$, so that $\beta_1=(a\vee b)_1=a_1$, and
thus $\beta\in \overline{U_a\setminus S}$. 
Assume that there is a $\gamma\in \overline{U_a\setminus S}$ such that
$\gamma\leq \beta$. Then, as above, there is an inner corner $c\neq a$ of $S$ 
such that $c\prec \gamma+(0,1,\ldots, 1)$ 
and $\gamma=a\vee c$. 
Now the inner corners $a, b, c$ of $S$ satisfy $a\vee b\vee c=a\vee
b$. Since $b\in\Delta_a$, $\{a,b\}$ is an edge of $\Delta$, which
means that $\{z^a, z^b\}$ is the unique set of minimal generators with
least common multiple $z^{a\vee b}$. It follows that $c\in\{a,b\}$ and
since $c\neq a$, we have that $c=b$, and thus $\gamma=\beta$. Hence
$\beta$ is minimal in $\overline{U_a\setminus S}$. 
We conclude that the inner corners of $\T_a$ are exactly the lattice
points $a\vee b$, where $b\in \Delta_a^*(1)$.

\end{proof}

Let $\pi: \R^n_{x_1,\ldots, x_n}\to \R^{n-1}_{x_2,\ldots, x_n}$ be the projection 
$\pi: (x_1,\ldots, x_n) \mapsto (x_2,\ldots, x_n)$ 
and let $\rho:\R^n_{x_1,\ldots, x_n}\to \R^{n-1}_{x_2,\ldots, x_n}$ be the
affine mapping 
defined by $\rho(x)=\pi(x-a)$. 
Let $M_a$ be the monomial ideal in $A_{n-1}$ (with variables $z_2,\ldots,
z_n$) 
defined by
$\T_a$ and $\rho$ as in Section ~\ref{trappsektion}.

\begin{claim}\label{generiskt} 
$M_a$ is a generic Artinian monomial ideal. The Scarf complex
$\Delta_{M_a}$ consists of faces of the form 
$\{\rho (a\vee b^1), \ldots, \rho (a\vee b^j)\}$, 
where $\{b^1,\ldots, b^j\}$ is a face of $\Delta_a^*$. 
\end{claim} 

\begin{proof}
Since $S$ is bounded, $T_a\subset S$ is bounded, and thus $M_a$ is
Artinian.

To show that $M_a$ is generic, assume that there are two minimal
generators $z^\beta$ and $z^\gamma$ that have the same positive degree
in some variable, i.e., $\beta_\ell=\gamma_\ell$ for some $2\leq \ell\leq
n$. 
Assume that $\beta=\rho(a\vee b)$ and $\gamma=\rho (a\vee c)$, where
$b,c\in\Delta_a^*(1)$.
Then $\beta_\ell=(a\vee b)_\ell-a_\ell$ and $\gamma_\ell=(a\vee
c)_\ell-a_\ell$, and thus $\beta_\ell=\gamma_\ell>0$ implies that 
$(a\vee b)_\ell=(a\vee c)_\ell >a_\ell$, which in turn implies that
$b_\ell=c_\ell$. 
Since $M$ is
generic there is a minimal generator $z^d$ of $M$ that strictly
divides $\lcm (z^b, z^c)$, i.e., $d_k <(b\vee c)_k$ for all 
$k$ such that $(b\vee c)_k>0$. In particular,
$d_1<(b\vee c)_1=a_1$. 
Set $\delta=\rho(a\vee d)$ and take $k$ such that 
\[
0<(\beta\vee \gamma)_k = \big ( (a\vee b-a)\vee (a\vee c - a)\big
)_k=(a\vee b \vee c-a)_k.
\]
Then $(b\vee c)_k>a_k\geq 0$ and thus $d_k<(b\vee c)_k$. This implies 
that 
\[
\delta_k=(d\vee a - a)_k < (a\vee b \vee c - a)_k = (\beta\vee
\gamma)_k.
\] 
It follows that $z^\delta$ strictly divides $\lcm(z^\beta, z^\gamma)=z^{\beta\vee \gamma}$. 
Now $a\vee d\in\overline{U_a\setminus S}$, since $a\vee d\geq a$,
$(a\vee d)_1=a_1$ and $z^{a\vee d}\in M$. 
Hence $z^\delta\in M_a$ by 
definition. To conclude, $M_a$ is a generic monomial ideal.

Since $M_a$ is generic, the Scarf complex $\Delta_{M_a}$ is simplicial
and the top-dimensional faces are precisely the outer corners of the
staircase of $M_a$, i.e., simplices of the form $\{\rho(a\vee b^1),\ldots, \rho(a\vee
b^{n-1})\}$, where $\{b^1,\ldots, b^{n-1}\}$ is in
$\Delta_a^*(n-1)$. Since $\Delta_{M_a}$ and $\Delta_a$ are
simplicial, it follows that the faces of $\Delta_{M_a}$ are of
the desired form. 
\end{proof}

\begin{claim}\label{disjunkta}
Assume that $a\neq b$ are inner corners of $S$. Then 
$\pi (\T_a) \cap \pi (\T_b)=\emptyset$. 
\end{claim}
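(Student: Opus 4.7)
The plan is to argue by contradiction: suppose that there is a common projected point $p=(p_2,\ldots,p_n)\in\pi(R_a)\cap\pi(R_b)$, and consider the lifts $x=(a_1,p_2,\ldots,p_n)\in R_a$ and $y=(b_1,p_2,\ldots,p_n)\in R_b$; by the definitions of $R_a$ and $R_b$ one has $p_\ell>a_\ell$ and $p_\ell>b_\ell$ for every $\ell\ge 2$. The overall strategy will be to produce an inner corner $c$ of $S$ with $y\succ c$; since the set $c+\R_{>0}^n$ is open and disjoint from $S$ by \eqref{complement} (strict inequality in every coordinate places $y$ in the \emph{open} region removed by $c$, and passage to the closure in the definition of $S$ cannot restore it), this contradicts $y\in R_b\subset S$.

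The case $a_1\neq b_1$ is immediate: assuming $a_1<b_1$, the lift $y$ satisfies $y_1=b_1>a_1$ and $y_\ell=p_\ell>a_\ell$ for $\ell\ge 2$, so $y\succ a$ and we take $c=a$. The main case $a_1=b_1>0$ uses the genericity of $M$: since $z^a$ and $z^b$ are distinct minimal generators sharing the positive $z_1$-degree $a_1$, there is a third minimal generator $z^c$ strictly dividing $z^{a\vee b}$. Coordinate by coordinate one verifies $y\succ c$: in the first slot $c_1<(a\vee b)_1=a_1=y_1$; for $\ell\ge 2$ with $(a\vee b)_\ell>0$ strict division gives $c_\ell<(a\vee b)_\ell<p_\ell=y_\ell$; and for $\ell\ge 2$ with $(a\vee b)_\ell=0$ both $a_\ell$ and $b_\ell$ vanish, which forces $c_\ell=0<p_\ell=y_\ell$ (using $p_\ell>b_\ell=0$).

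The residual possibility $a_1=b_1=0$ must be treated separately because the genericity hypothesis provides no third generator when the shared degree is zero. Here the plan is to show directly that $R_a$ is empty: for any candidate $x$ with $x_1=0$ and $x_\ell>a_\ell$ for $\ell\ge 2$ one has $\lfloor x\rfloor\ge a$ componentwise (since the $a_\ell$ are non-negative integers), so $z^{\lfloor x\rfloor}$ is divisible by the generator $z^a\in M$; the same remains true for every sufficiently close point of $\R^n_{>0}$, so no approximating sequence from the defining set of $S^\circ$ exists, and $x\notin S=\overline{S^\circ}$. The main obstacle is the bookkeeping in the generic subcase --- making sure the third generator $c$ strictly dominates $y$ even when some coordinates of $a\vee b$ vanish --- together with the minor closure check at the wall $x_1=0$, which is a routine but easy-to-miss floor-function verification.
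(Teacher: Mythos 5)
Your proof is correct and follows essentially the same approach as the paper: split on whether $a_1\neq b_1$ (lift the common projected point to the higher $x_1$-level and observe it dominates the lower inner corner, hence cannot lie in $S$) or $a_1=b_1$ (use genericity to obtain a third generator $z^c$ strictly dividing $\lcm(z^a,z^b)$ and dominated by the common lift). You are, however, more careful than the paper about the subcase $a_1=b_1=0$: genericity only produces the third generator when the shared degree is \emph{positive}, so the paper's appeal to ``$M$ is generic'' does not literally apply there, and your separate observation that $R_a=\emptyset$ whenever $a_1=0$ (since the floor of any nearby point of $\R^n_{>0}$ already dominates $a$, so no approximating sequence from the defining set of $S$ exists) closes that small gap cleanly.
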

In particular, it follows that 
\begin{equation}\label{nollbrutal}
\T_a\cap \T_b=\emptyset \text{ if } a\neq
b.
\end{equation}

\begin{proof} 
Let us first assume that $a_1\neq b_1$ and that $\pi (\T_a)\cap \pi
(\T_b)\neq \emptyset$; without loss of generality we may assume that
$a_1>b_1$.
Pick $\beta=(\beta_2,\ldots, \beta_n)\in \pi (\T_a)\cap \pi (\T_b)$
and let $\gamma=(a_1,\beta_2,\ldots, \beta_n)$. 
Since $\beta\in \pi(\T_b)$, $\beta_j>b_j$ for $j=2,\ldots, n$ and since
$a_1>b_1$ it follows that $\gamma \succ b$. Hence $\gamma$ is in
the interior of $\R^n_{>0}\setminus S$. On the other hand,
$\beta\in\pi(\T_a)$ implies that $\gamma\in \T_a \subset \partial
S$, which contradicts that $\gamma$ is in
the interior of $\R^n_{>0}\setminus S$. Thus $\pi (\T_a) \cap \pi (\T_b) = \emptyset$ if $a_1\neq b_1$. 

Next, assume that $a_1=b_1>0$ and that $\pi(\T_a)\cap \pi (\T_b)\neq
\emptyset$; if $a_1=0$ or $b_1=0$, the claim is trivially true by
Claim ~\ref{lila}. Since $\T_a$ and $\T_b$ are both contained in the hyperplane
$H_a$,  $\pi(\T_a)\cap \pi (\T_b)\neq
\emptyset$ is equivalent to that $\T_a\cap \T_b\neq
\emptyset$. Assume that $\beta\in \T_a\cap \T_b$. Then
$\beta_\ell>(a\vee b)_\ell$ for $\ell=2,\ldots, n$. Since $M$ is
generic and $a_1=b_1$, 
there is a minimal generator $z^c$ that strictly divides $\lcm (z^a,
z^b)$; in particular $c_1<(a\vee b)_1=\beta_1$. It follows that 
$\beta\succ c$, 
and thus $\beta$ is contained in
the interior of $\R^n_{>0}\setminus S$, which contradicts that
$\beta\in \T_a\cap \T_b\subset \partial S$. 
Thus we have proved that $\pi(\T_a)\cap \pi (\T_b)=\emptyset$ when
$a\neq b$. 
\end{proof}

\begin{claim}\label{finnsen}
For each $x\in S$, there is an inner corner $a$ of $S$ such that $\pi
(x)\in \pi (\T_a)$. 
\end{claim}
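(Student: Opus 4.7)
The idea is to travel upward along the vertical line $\{(t, x_2, \ldots, x_n) : t > 0\}$ until one exits $S$; the inner corner that blocks further progress in the $x_1$-direction should be the desired $a$, with $a_1$ equal to the exit time and $a_\ell < x_\ell$ for $\ell \geq 2$ forced automatically from the geometry.

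Concretely, I would fix $x \in S$ and consider $T = \{t > 0 \mid (t, x_2, \ldots, x_n) \in S\}$. The set $T$ contains $x_1$ and is bounded above because $M$ is Artinian and hence $S$ is bounded. Let $t^{*} = \sup T$ and $y = (t^{*}, x_2, \ldots, x_n)$. Since $S$ is closed by definition~\eqref{trappdef}, we have $y \in S$. On the other hand, for every $\varepsilon > 0$ the point $(t^{*} + \varepsilon, x_2, \ldots, x_n)$ lies in $S^c$, so by~\eqref{complement} there is an inner corner $b^{\varepsilon}$ of $S$ with $b^{\varepsilon}_1 < t^{*} + \varepsilon$ and $b^{\varepsilon}_\ell < x_\ell$ for $\ell = 2, \ldots, n$. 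Since $M$ has only finitely many inner corners, some fixed inner corner $a$ arises as $b^{\varepsilon}$ for arbitrarily small $\varepsilon$, and this $a$ satisfies $a_1 \leq t^{*}$ together with $a_\ell < x_\ell$ for all $\ell \geq 2$.

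To conclude, I would rule out $a_1 < t^{*}$: combined with $a_\ell < x_\ell$ for $\ell \geq 2$ and $y_1 = t^{*}$, it would give $y \succ a$, forcing $y \in S^c$ via~\eqref{complement} and contradicting $y \in S$. Hence $a_1 = t^{*}$, so $y = (a_1, x_2, \ldots, x_n)$ satisfies $y_1 = a_1$, $y_\ell > a_\ell$ for $\ell \geq 2$, and $y \in S$; that is, $y \in R_a$, and therefore $\pi(x) = \pi(y) \in \pi(R_a)$.

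The only step that I anticipate needing care is the pigeonhole/finiteness argument extracting a single inner corner $a$ from the family $\{b^{\varepsilon}\}_{\varepsilon > 0}$, together with the closedness of $S$ guaranteeing $y \in S$; once those are in place the rest of the argument is a routine unwinding of the order-theoretic descriptions of $S$ and $S^c$ recorded in Section~\ref{trappsektion}.
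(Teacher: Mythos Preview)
Your proof is correct and takes a genuinely different route from the paper's. The paper argues by an inductive climb through the projections of inner corners: it starts from the inner corner $a$ with $\pi(a)=0$ (the pure power of $z_1$, which exists since $M$ is Artinian), and then repeatedly invokes the description of $R_a$ from Claim~\ref{lila} to pass to an inner corner $b$ with $\pi(b)\geq\pi(a)$ whenever $\pi(x)\notin\pi(R_a)$; termination is forced by a maximality observation. Your argument instead locates the right inner corner in one stroke, by travelling up the vertical fibre over $\pi(x)$ and reading off the blocking generator at the exit height $t^*$. Your approach is more elementary and self-contained---it uses only the description~\eqref{complement} of $S$ and the finiteness of the set of inner corners, and does not depend on Claim~\ref{lila}---whereas the paper's proof fits more organically into the recursive framework of lower-dimensional staircases developed in that section. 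Both arguments use only the Artinian hypothesis, not genericity.
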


\begin{proof}

Consider $x\in\R^n_{>0}$. 
Then there is at least one inner corner $a^0$ of $S$ such that
$\pi(x)\succ \pi(a^0)$. 
Indeed, since $M$ is Artinian, there is a
generator of the form $z_1^{a_1}$, whose exponent $(a_1,0,\ldots, 0)$
is mapped to the origin in $\R^{n-1}$, and thus we can choose
$a^0=(a_1,0,\ldots, 0)$. 

Given such an $a^0$, either $\pi(x)\in\pi(\T_{a^0})$ or $\pi(x)\succ \pi(\beta^1)$ for some inner
corner $\beta^1$ of $\T_{a^0}$. In the latter case, by Claim ~\ref{lila}, $\beta^1=a^0\vee a^1$, where
$a^1\in\Delta_{a^0}^*(1)$; in particular, $a^1_1<a^0_1$. 
Now $\pi(x)\succ \pi(a^0\vee a^1)\geq \pi(a^1)$, which implies that
either $\pi(x)\in\pi(\T_{a^1})$ or $\pi(x)\succ \pi(\beta^2)$ for some
$\beta^2=a^1\vee a^2$, where $a^2\in\Delta_{a^1}^*(1)$; in particular, $a^2_1<a^1_1$.

By repeating this argument we get a sequence of inner corners
$a^0,\ldots, a^k$, such that $\pi(x)\succ \pi(a^j)$ for $j=0,\ldots, k$
and either $\pi(x)\in \pi(\T_{a^k})$ or $a^k_1=0$. 
If $a^k_1=0$, then $\pi(x)\succ \pi(a^k)$ implies that $x\succ a^k$,
which means that $x\notin S$. Hence, either $\pi(x)\in\pi(\T_a)$ for
some inner corner $a$ of $S$ or $\pi\notin S$.

\end{proof}

Next, we will use the staircases $\T_a$ to construct a partition of $S$. 
For each inner corner $a$ of $S$, let 
\[
P_a=\{x\in S\mid \pi (x)\in\pi (\T_a)\}. 
\]
In other words, $P_a$ consists of everything in $S$ ``below'' the staircase
$\T_a$. By a slight abuse of notation,  $P_a=]0,a_1]\times \T_a$. 

\begin{remark}\label{parti}
By Claim \ref{finnsen}, each $x\in S$ is contained in a $P_a$ for some
inner corner $a$, and by Claim ~\ref{disjunkta} the intersection
$P_a\cap P_b$ is empty if $a$ and $b$ are different inner
corners. Thus the set of (non-empty) $P_a$ gives a partition of $S$. 
\end{remark}

\begin{remark}\label{februari}
Note that $P_a$ is a staircase itself with the same outer corners as
$\T_a$, i.e., $\alpha\in \Delta_a(n)$.
\end{remark} 
Next, we will see that each $S_\alpha$ is contained in a
$P_a$.

\begin{claim}\label{etthorn} 
For each $\alpha\in \Delta(n)$, $S_\alpha$ is contained in a
$P_a$. More precisely, if $\alpha\in \Delta_a(n)$, then
$S_\alpha\subset P_a$. 
\end{claim}
\begin{proof}

Let us fix $\alpha\in \Delta(n)$. Recall from Section \ref{delkomplex}
that there is a unique $a$ such that $\alpha\in \Delta_a(n)$. We need
to show that $S_\alpha\cap P_b=\emptyset$ for all $b\neq a$. 

We first consider the case when $b$ is such that $b_1>\alpha_1$. Take 
$x\in P_b$. By Remark ~\ref{februari}, $P_b$ is a staircase with outer
corners $\Delta_b(n)$. It follows that $x\leq \beta$ for some
$\beta\in\Delta_b(n)$, which, by the definition of the $S_\gamma$,
implies that $x\in \bigcup_{\gamma\geq_\sigma \beta} S_\gamma$. Since
$\beta_1=b_1>\alpha_1$, $\beta>_\sigma \alpha$, and thus, since the
$S_\gamma$ are disjoint, $x\notin S_\alpha$. We conclude that
$S_\alpha\cap P_b=\emptyset$ in this case. 

Next we consider the case when $b_1=\alpha_1$. 
Assume that $x\in S_\alpha \cap P_b$. 
Then $\alpha_\ell\geq x_\ell > b_\ell$ for
$\ell=2,\ldots, n$. Since $\alpha_1=b_1$ it follows that $\alpha\in\T_b$. 
On the other hand, by Claim ~\ref{lila}, $\alpha\in \Delta_a(n)$
implies that $\alpha\in \T_a$, which, by
\eqref{nollbrutal}, contradicts that $\alpha\in\T_b$. 
It follows that 
$S_\alpha\cap P_b=\emptyset$. 

Finally we consider the case when $b_1<\alpha_1$. Assume that $x\in
S_\alpha\cap P_b$. Then, as above, $\alpha_\ell\geq x_\ell> b_\ell$ for
$\ell=2,\ldots, n$. Since also $\alpha_1>b_1$, it follows that
$\alpha\succ b$, which however contradicts that $b$ is an inner corner
of $S$. Hence $S_\alpha\cap P_b=\emptyset$ also in this case, which
concludes the proof.

\end{proof}

\medskip

Next, we will inductively define staircases and partitions of $S$ associated
with faces of 
$\Delta$ of higher dimension. 
Given vertices $a^1,\ldots, a^{k-1}$ of $\Delta$, such that
$\Delta_{a^1,\ldots, a^{k-1}}$ is non-empty (in particular,  $a^j$ is
in $\Delta_{a^1,\ldots, a^{j-1}}$ for $j=2,\ldots, k-1$) and 
an inner corner $a^k$ of $\Delta_{a^1,\ldots, a^{k-1}}^*$, assuming
that $\T_{a^1,\ldots, a^{k-1}}$ is defined, we let 
\[
\T_{a^1,\ldots, a^k}:=\{x\in \T_{a^1,\ldots, a^{k-1}} \mid
x_k=(a^1\vee\cdots\vee a^k)_k, x_j > (a^1\vee\cdots\vee a^k)_j,
j=k+1,\ldots, n\}. 
\]
Recall that by the definition of the sequence $a^1,\ldots, a^k$, in
fact, $(a^1\vee\cdots\vee a^k)_j=a_j^j$ for $j=1,\ldots, k$, see Section \ref{delkomplex}. 
Moreover, note that $\T_{a^1,\ldots, a^k}$ is contained in the
codimension $k$-plane 
\[
H_{a^1,\ldots, a^k}:=\{x_1=(a^1\vee\cdots \vee a^k)_1,  \ldots, x_k=(a^1\vee\cdots\vee
a^k)_k\}=\{x_1=a^1_1, \ldots, x_k=a_k^k\}.
\]

Let 
$\pi_k:\R^n_{x_1,\ldots, x_n}\to \R^{n-k}_{x_{k+1},\ldots, x_n}$ be the projection
$\pi_k:(x_1,\ldots, x_n)\mapsto (x_{k+1},\ldots, x_n)$, 
and let $\rho_k: \R^n_{x_1,\ldots, x_n}\to \R^{n-k}_{x_{k+1},\ldots, x_n}$ be the
affine mapping 
defined by $\rho_k: x\mapsto \pi_k(x-a^1\vee\cdots\vee a^k)$.

\begin{claim}\label{fredag} 
Assume that $\T_{a^1,\ldots, a^{k-1}}$ is non-empty and that
$a_k\in\Delta_{a^1,\ldots, a^{k-1}}^*(1)$. Then $\T_{a^1,\ldots,
  a^k}=\emptyset$ if and only if $a_k^k=(a^1\vee\cdots\vee
a^{k-1})_k$. 
If $\T_{a^1,\ldots, a^k}$ is non-empty, then it is a staircase 
in $H_{a^1,\ldots, a^k}$. 
The origin of $\T_{a^1,\ldots, a^k}$ is $a^1\vee\cdots\vee a^k$, the
outer corners are the top-dimensional faces of $\Delta_{a^1,\ldots,
  a^{k}}$ and the inner corners are the lattice points of the form 
$a^{1}\vee\ldots\vee a^{k}\vee b$, where $b$ is a vertex of
$\Delta_{a^1,\ldots, a^{k}}^*$. 

The monomial ideal $M_{a^1,\ldots, a^k}$ defined by $\T_{a^1,\ldots,
  a^k}$ and $\rho_k$ as in Section ~\ref{trappsektion}, i.e., it has 
staircase $\rho_k(\T_{a^1,\ldots, a^k})$, is an Artinian generic
monomial ideal. The Scarf complex $\Delta_{M_{a^1,\ldots, a^k}}$
consists of faces of the form 
$\{\rho_k(a^1\vee\cdots\vee a^k\vee b^1), \ldots,
\rho_k(a^1\vee\cdots\vee a^k\vee b^j)\}$,
where $\{b^1,\ldots, b^j\}$ is a face of $\Delta_{a^1,\ldots,
  a^k}^*$. 
\end{claim}

\begin{proof} 
By Claims ~\ref{lila} and ~\ref{generiskt} the claim holds for
$k=1$. Assume that it holds for $k=\k-1$; we then need to prove that it holds for $k=\k$. 

\smallskip

First, it is clear from the definition that $\T_{a^1,\ldots, a^\k}$ is
contained in $H_{a^1,\ldots, a^\k}$. 
If $a_\k^\k=(a^1\vee\cdots\vee a^{\k-1})_\k$ then $H_{a^1,\ldots, a^\k}\cap
\T_{a^1,\ldots, a^{\k-1}}=\emptyset$ and thus $T_{a^1,\ldots, a^\k}$ is
empty. 
In general, note that $T_{a^1,\ldots, a^\k}$ is empty exactly if
$a^1\vee\cdots\vee a^\k+(0,\ldots, 0,1,\ldots, 1)\notin S$; here
$(0,\ldots, 0,1,\ldots, 1)$ means that the first $\k$ entries are $0$
and the rest are $1$. 
Assume that $a_\k^\k\neq (a^1\vee\cdots\vee a^{\k-1})_\k$. Then,
by thef definition of the $a_j^j$, in fact, $a_\k^\k>(a^1\vee\cdots\vee a^{\k-1})_\k\geq 0$. Since
$T_{a^1,\ldots, a^{\k-1}}\neq \emptyset$ and the claim holds for $k=\k-1$
by assumption, $a_j^j>0$ for $j=1,\ldots, \k-1$, and thus $a+(0,\ldots,
0,1,\ldots, 1)\in \R^n_{>0}$. Then the condition 
$a^1\vee\cdots \vee a^\k+(0,\ldots, 0,1,\ldots, 1)\notin S$ implies
that there is an inner corner $c$ of $S$ such that $c\prec
a+(0,\ldots, 0,1,\ldots, 1)$. In particular, $c\leq a$, which
contradicts that $a$ is an inner corner. Thus $T_{a^1,\ldots, a^\k}\neq\emptyset$ if
$a_\k^\k\neq (a^1\vee\cdots\vee a^{\k-1})_\k$.

\smallskip

Let us now assume that $a_\k^\k>(a^1\vee\cdots\vee a^{\k-1})_\k$. 
We will use Claim ~\ref{lila} to show that 
$T_{a^1,\ldots, a^\k}$ 
 is a
staircase of the desired form. 
Let
$\widetilde S$ be the staircase $\rho_{\k-1}(T_{a^1,\ldots,
  a^{\k-1}})\subset \R^{n-\k+1}_{x_\k,\ldots, x_n}$ 
  of $M_{a^1,\ldots, a^{\k-1}}$ and choose an inner corner $\tilde a:=
  \rho_{\k-1}(a^1\vee\cdots\vee a^\k)$ of $\widetilde S$. 
Then note that 
\begin{multline}\label{fiol}
\T_{\tilde{a}}=\{x\in \widetilde S\mid x_\k=\tilde a _\k,
x_\ell> \tilde a_\ell, \ell=\k+1,\ldots, n\} 
=\\
\{x\in\rho_{\k-1}(\T_{a^1,\ldots, a^{\k-1}})\mid x_\k = 
\rho_{\k-1}(a^1\vee \cdots \vee a^\k)_\k,  
x_\ell > \rho_{\k-1}(a^1\vee \cdots \vee a^\k)_\ell, \ell=\k+1,\ldots,
n\}
=\\
\rho_{\k-1}(\T_{a^1,\ldots, a^\k}).
\end{multline}
Now, by Claim ~\ref{lila}, $T_{\tilde a}$ is a staircase in the
hyperplane $\{x_\k=\tilde a_\k\}\subset\R^{n-\k+1}_{x_\k,\ldots, x_n}$
with origin ~$\tilde a$. 
The outer
corners are the top-dimensional faces
$\tilde\alpha$ of $\Delta_{\tilde a}$, i.e., the top-dimensional faces
$\tilde\alpha$ of $\Delta_{M_{a^1,\ldots, a^{\k-1}}}$ such that 
\begin{equation}\label{nodnod}
\tilde\alpha_\k=\tilde a_\k=\rho_{\k-1}(a^1\vee\cdots \vee a^\k)_\k=(a^1\vee\cdots \vee
a^\k)_\k=a^\k_\k.
\end{equation}
Since the
  claim holds for $k=\k-1$, that $\tilde\alpha$ is a top-dimensional face
  of $\Delta_{M_{a^1,\ldots,
      a^{\k-1}}}$ means that
    $\tilde\alpha=\rho_{\k-1}(a^1\vee\cdots\vee a^{\k-1}\vee \beta)$,
    where $\beta$ is a top-dimensional face of $\Delta_{a^1,\ldots,
      a^{\k-1}}^*$. In other words, $\tilde\alpha=\rho_{\k-1}(\alpha)$,
    where $\alpha$ is a top-dimensional face of $\Delta$ such that
    $\alpha_j=a^j_j$ for $j=1,\ldots, \k-1$. By
    \eqref{nodnod} we also have that
    $\alpha_\k=\rho_{\k-1}(\alpha)_\k=a_\k^\k$, so that $\alpha\in \Delta_{a^1,\ldots, a^\k}(n)$. To conclude, 
    the outer corners of $T_{\tilde a}$ are of the form
    $\rho_{\k-1}(\alpha)$ where $\alpha\in\Delta_{a^1,\ldots, a^\k}(n)$.

Moreover, by Claim \ref{lila} the inner corners of $\T_{\tilde a}$ are the lattice points $\tilde a\vee \tilde b$,
where $\tilde b$ is a vertex of $\Delta_{\tilde a}^*$.
Since the lemma holds for $k=\k-1$, this means
that $\tilde b=\rho_{\k-1}(a^1\vee\cdots\vee a^{\k-1}\vee b)$, where $b\in\Delta_{a^1,\ldots, a^{\k-1}}^*(1)$.
Since $\tilde b\neq \tilde a$, $b\neq a^\k$, and thus 
$\tilde a\vee \tilde b=\rho_{\k-1}(a^1\vee \cdots \vee a^\k\vee b)$,
where $b\in \Delta_{a^1,\ldots, a^\k}^*(1)$.

Since the restriction $\rho_{\k-1}:H_{a^1,\ldots, a^{\k-1}}\to
\R^{n-\k+1}$ 
is a just a translation of the plane $H_{a^1,\ldots, a^{\k-1}}$
(if we consider $\R^{n-\k+1}$ as
embedded in $\R^n$) 
it follows that 
$\T_{a^1,\ldots, a^\k}$ is a
staircase in $H_{a^1,\ldots, a^\k}$ with origin $a^1\vee\cdots\vee a^\k$, where the
outer corners are the top-dimensional faces of $\Delta_{a^1,\ldots,
  a^\k}$ and the inner corners are of the form $a^1\vee \cdots \vee
a^\k\vee b$, where $b\in \Delta_{a^1,\ldots, a^\k}^*(1)$. This proves
the first part of the claim.

\smallskip 

Next, we will use Claim ~\ref{generiskt} to prove the second part of
the claim. 
Let $\tilde\rho:\R^{n-\k+1}_{x_\k,\ldots, x_n}\to \R^{n-\k}_{x_{\k+1},\ldots,
  x_n}$ be the affine map $\tilde\rho:(x_{\k},\ldots, x_n)\mapsto
(x_{\k+1}-\tilde a_{\k+1},\ldots, x_n-\tilde a_n)$. 
Note that 
$\rho_\k=\tilde\rho \rho_{\k-1}$. It follows that the ideal $M_{a^1,\ldots, a^\k}$ 
has staircase 
\[
\rho_\k(T_{a^1,\ldots, a^\k})=\tilde\rho \rho_{\k-1}(T_{a^1,\ldots,
  a^\k})= \tilde\rho(T_{\tilde a}), 
\]
where we have used \eqref{fiol} for the second equality. 
In other words, $M_{a^1,\ldots, a^\k}$ 
is the
ideal defined by $\T_{\tilde a}$ and $\tilde\rho$ as in Section
~\ref{trappsektion}. 
Thus by Claim
~\ref{generiskt}, it is an Artinian generic monomial ideal.

Moreover, by Claim \ref{generiskt}, the Scarf complex $\Delta_{M_{a^1,\ldots, a^\k}}$ consists of
faces of the form 
\begin{equation}\label{invasion}
\{\tilde \rho(\tilde a\vee \tilde b^1), \ldots, \tilde \rho(\tilde
a\vee \tilde b^j)\}, 
\end{equation}
where $\{\tilde b^1,\ldots, \tilde b^j\}$ is a
face of $\Delta_{\tilde a}^*$.
As above, 
 $\tilde b^\ell\in \Delta_{\tilde
  a}^*(1)$ implies that $\tilde a\vee \tilde b^\ell=\rho_{\k-1}(a^1\vee\cdots\vee a^{\k-1}\vee
b^\ell)$, where $b\in\Delta_{a^1,\ldots, a^\k}^*(1)$. 
Hence, 
\[
\tilde \rho(\tilde a\vee \tilde b^\ell)=
\tilde\rho\rho_{\k-1}(a^1\vee\cdots\vee
a^\k\vee b^\ell)=
\rho_\k(a^1\vee\cdots\vee
a^\k\vee b^\ell)
\]
where $b\in\Delta_{a^1,\ldots, a^\k}^*(1)$. Thus the faces
\eqref{invasion} are of the desired form, and we have proved the
second part of the claim.

\end{proof}

To construct the partitions associated with the staircases
$\T_{a^1,\ldots, a^k}$, we 
define inductively 
\[
P_{a^1,\ldots, a^k}=\{x\in P_{a^1,\ldots, a^{k-1}}\mid \pi_k(x)\in
\pi_k(\T_{a^1,\ldots, a^k})\}. 
\]
Then $P_{a^1,\ldots, a^k}$ is a $k$-dimensional cuboid times the 
$(n-k)$-dimensional staircase $\T_{a^1,\ldots, a^k}$. The $\ell$th side length is
given as the ``height'' of $\T_{a^1,\ldots, a^\ell}$ in $\T_{a^1,\ldots,
  a^{\ell-1}}$, which equals $(a^1\vee\cdots\vee
a^\ell-a^1\vee\cdots\vee a^{\ell-1})_\ell$.   
By a slight abuse of notation 
\begin{equation*}
P_{a^1,\ldots, a^k}=
]0,a^1_1]\times ]a^1_2, (a^1\vee a^2)_2]\times 
\cdots \times 
](a^1\vee \cdots \vee a^{k-1})_k, (a^1\vee \cdots \vee a^k)_k]\times
\T_{a^1,\ldots, a^k}.
\end{equation*}
In particular, 
\begin{equation}\label{pack}
P_{a^1,\ldots, a^n}=
]0,a^1_1]\times ]a^1_2, (a^1\vee a^2)_2]\times \cdots \times 
](a^1\vee \cdots \vee a^{n-1})_n, (a^1\vee \cdots \vee a^n)_n].
\end{equation} 
\begin{remark}\label{bolla}
Note that $P_{a^1,\ldots, a^k}$ is, in fact, a staircase with 
outer corners 
$\alpha\in\Delta_{a^1,\ldots, a^k}(n)$.
\end{remark}

\begin{claim}\label{extraclaim}
For each $k$, the set of non-empty $P_{a^1,\ldots, a^k}$ gives a
partition of $S$. 
\end{claim}
In particular, since $\T_{a^1,\ldots, a^k}\subset P_{a^1,\ldots, a^k}$, 
\begin{equation}\label{brutal}
\T_{a^1,\ldots, a^k}\cap \T_{b^1,\ldots, b^k}=\emptyset \text{ if }
\{a^1,\ldots, a^k\}\neq \{b^1,\ldots, b^k\}.
\end{equation}

\begin{proof}

By Remark \ref{parti}, the claim holds for
$k=1$. 
Assume that the claim holds for $k=\k-1$. 
To prove that it holds for $k=\k$ it suffices to show that, given a
sequence $a^1,\ldots, a^{\k-1}$ of inner corners, the set of
$P_{a^1,\ldots, a^\k}$, where $a^\k\in\Delta_{a^1,\ldots,
  a^{\k-1}}^*(1)$, gives a partition of $P_{a^1,\ldots, a^{\k-1}}$. Take $x\in P_{a^1,\ldots,
  a^{\k-1}}$. 
We then need to show that there is
  exactly one choice of $a^\k\in\Delta_{a^1,\ldots, a^{\k-1}}^*(1)$ such
  that $\pi_\k(x)\in\pi_\k(T_{a^1,\ldots,
    a^\k})$.

Let $\widetilde S$ be the staircase $\pi_{\k-1}(T_{a^1,\ldots,
  a^{\k-1}})\subset \R^{n-\k+1}_{x_\k,\ldots, x_n}$. By Claim
  \ref{fredag}, $\widetilde S$ is a staircase with (origin
  $\pi_{\k-1}(a^1\vee\cdots\vee a^{\k-1})$ and) inner corners
  $\pi_{\k-1}(a^1\vee\cdots\vee a^\k)$, where $a^\k\in\Delta_{a^1,\ldots,
    a^{\k-1}}^*(1)$. 
Given an inner corner $\tilde a:=\pi_{\k-1}(a^1\vee\cdots\vee a^\k)$ of
$\widetilde S$, 
note that 
\begin{multline}\label{viola}
\T_{\tilde a}=\{x\in \widetilde S\mid x_\k=\tilde a_\k, x_\ell > \tilde
a_\ell \text{ for } \ell=\k+1,\ldots, n\}=\\
\{x\in\pi_{\k-1}(\T_{a^1,\ldots, a^{\k-1}})\mid x_\k=
 \pi_{\k-1}(a^1\vee\cdots\vee a^\k)_\k, 
x_\ell> \pi_{\k-1}(a^1\vee\cdots\vee a^\k)_\ell \text{ for } \ell=\k+1,\ldots, n\}=\\
\pi_{\k-1}(\T_{a^1,\ldots, a^\k})
\end{multline}
Let $\tilde\pi:\R^{n-\k+1}_{x_\k,\ldots, x_n}\to
\R^{n-\k}_{x_{\k+1},\ldots, x_n}$ be the projection $(x_\k,\ldots,
x_n)\mapsto (x_{\k+1},\ldots, x_n)$. Then, clearly, $\tilde\pi
\pi_{\k-1}=\pi_\k$.  
By a slight modification of the proofs, Claims ~\ref{disjunkta}
and ~\ref{finnsen} hold also for staircases with origin different from
$0$; it follows that 
for $\tilde x\in \widetilde S$ there is exactly one inner corner
$\tilde a=\pi_{\k-1}(a^1\vee\cdots\vee a^\k)$ of $\widetilde S$ such
that 
\[
\tilde \pi
(\tilde x)\in \tilde \pi (\T_{\tilde a})=\tilde \pi
\pi_{\k-1}(\T_{a^1,\ldots, a^\k})=\pi_\k(\T_{a^1,\ldots, a^\k}),
\]
 where we
have used \eqref{viola} for the second equality. 
Now take $x\in P_{a^1,\ldots, a^{\k-1}}$. Then
$\pi_{\k-1}(x)\in\pi_{\k-1}(T_{a^1,\ldots, a^{\k-1}})$, and thus there is
  exactly one choice of $a^\k\in\Delta_{a^1,\ldots, a^{\k-1}}^*(1)$ such
  that 
$\tilde\pi \big (\pi_{\k-1}(x)\big ) =\pi_\k(x)\in\pi_\k(T_{a^1,\ldots,
  a^\k})$. This concludes the proof.

\end{proof}

\begin{claim}\label{flerahorn} 
For each $k$ and each $\alpha\in \Delta(n)$, there is a unique
sequence of $k$ inner corners 
$a^1,\ldots, a^k$ such that $S_\alpha$ is contained in $P_{a^1,\ldots,
  a^k}$. 
More precisely, if  $\alpha\in \Delta_{a^1,\ldots, a^k}$,
  then $S_\alpha\subset P_{a^1,\ldots, a^k}$. 
\end{claim} 

\begin{proof} 

Let us fix $k$. 
Recall from Section \ref{delkomplex} that given $\alpha\in
\Delta(n)$, there is a unique sequence of $k$ inner corners
$a^1,\ldots, a^k$ such that $\alpha\in \Delta_{a^1,\ldots,
  a^k}(n)$. Also, recall from the definition of $\Delta_{a^1,\ldots,
  a^k}$ that $(a^1\vee\cdots \vee a^k)_j=a^j_j$ for $j=1,\ldots, k$. 
To prove the claim we need to show that $S_\alpha\cap P_{b^1,\ldots,
  b^k}=\emptyset$ for all sequences of $k$ inner corners $b^1,\ldots, b^k$ different from
$a^1,\ldots, a^k$. 

We first consider the case when there is an $\ell\leq k$, such that $b^j_j=a^j_j$
for $j<\ell$ and $b_\ell^\ell>a_\ell^\ell$. Pick $x\in P_{b^1,\ldots,
  b^k}$. Since $P_{b^1,\ldots, b^k}$ is a staircase with outer corners
in $\Delta_{b^1,\ldots, b^k}(n)$, see Remark \ref{bolla}, it follows that $x\leq \beta$ for
some $\beta\in\Delta_{b^1,\ldots, b^k}(n)$. 
By the definition of the $S_\gamma$, then $x\in
\bigcup_{\gamma\geq_\sigma \beta} S_\gamma$. 
Since $\beta^j_j=b^j_j=a^j_j$ for $j=1,\ldots, \ell-1$ and
$\beta^\ell_\ell=b^\ell_\ell>a^\ell_\ell$, $\beta >_\sigma \alpha$, and
thus, since the $S_\gamma$ are disjoint, $x\notin S_\alpha$. We
conclude that $S_\alpha\cap P_{b^1,\ldots,
  b^k}=\emptyset$ in this case. 

Next, we consider the case when $b^j_j=a^j_j$ for $j=1,\ldots,
k$. Assume that $x\in S_\alpha\cap P_{b^1,\ldots,
  b^k}$. Then $\alpha_j=a^j_j=b^j_j$ for $j=1,\ldots, k$ and
$\alpha_j\geq x_j>(b^1\vee\cdots\vee b^k)_j$ for $j=k+1,\ldots,
n$. Thus by definition $\alpha\in T_{b^1,\ldots, b^k}$. 
On the other hand, by Claim \ref{fredag} $\alpha\in T_{a^1,\ldots,
  a^k}$, which by \eqref{brutal} contradicts that $\alpha\in
T_{b^1,\ldots, b^k}$. 
It follows that $S_\alpha\cap P_{b^1,\ldots,
  b^k}=\emptyset$. 

Finally we consider the case when there is an $\ell\leq k$, such that
$b_j^j=a_j^j$ for $j<\ell$ and $b_\ell^\ell<a_\ell^\ell$. If $\ell=1$
we know from (the proof of) Claim \ref{etthorn} that $S_\alpha\cap
P_{b^1}=\emptyset$ and thus 
$S_\alpha\cap P_{b^1,\ldots,
  b^k}\subset S_\alpha\cap P_{b^1}=\emptyset$. 
Assume that $\ell\geq 2$ and that 
\[
x\in S_\alpha\cap P_{b^1,\ldots,
  b^k}\subset S_\alpha\cap P_{b^1,\ldots, b^\ell}.
\]
Then $\alpha_j=a_j^j=b_j^j$ for
$j=1,\ldots, \ell-1$, $\alpha_\ell=a^\ell_\ell > b_\ell^\ell$, and $\alpha_j\geq x_j > (b^1\vee \cdots \vee b^\ell)_j$ for
$j=\ell+1,\ldots, n$. 
It follows that 
$\alpha\in T_{b^1,\ldots, b^{\ell-1}}$, but, from Claim \ref{fredag}
we know that 
$\alpha\in T_{a^1,\ldots, a^{\ell-1}}$, which 
leads to a contradiction by \eqref{brutal}. 
Hence $S_\alpha\cap P_{b^1,\ldots,
  b^k}=\emptyset$ also in this case. 

\end{proof}

Recall from Section ~\ref{delkomplex} that $\Delta_{a^1,\ldots, a^n}$
is just the simplex $\alpha\in\Delta(n)$ with vertices $a^1,\ldots,
a^n$. 
On the other hand, 
each outer corner $\alpha$ gives rise to a non-empty
$P_\alpha:=P_{a^1,\ldots, a^n}$ by choosing $a^\ell$ 
 as the $x_\ell$-vertex of $\alpha$. 
By Claim ~\ref{flerahorn}, $S_\alpha\subset P_\alpha$, and since both $\{P_\alpha\}$ and $\{S_\alpha\}$
give partitions of $S$, we conclude that $S_\alpha=P_\alpha$. 

Now, given ${\I}=\{i_1,\ldots, i_n\}\in \Delta (n)$,  
we choose $a^\ell$ as the $x_\ell$-vertex 
$a^{i_\eta(\ell)}$. Then Lemma 
~\ref{ratblock} follows in light of \eqref{pack}.

\smallskip 

For a general choice of $\sigma$ the above proof works verbatim, with
the 
coordinates $x_\ell$ and the variables $z_\ell$ replaced by $x_{\sigma(\ell)}$ and
$z_{\sigma(\ell)}$, respectively, and $\eta$ replaced by $\tau$.

\subsection{Computing $d\varphi$}\label{raknautdf}

Let us now compute the $e_{\I}^*$-entry of $d_\sigma\varphi$ for a given
${\I}=\{i_1,\ldots, i_n\}\in \Delta(n)$. 
Recall from \eqref{blabla} that 
\begin{equation}
\varphi_k = \sum_{{\mathcal J}=\{j_1, \ldots, j_k\} \subset {\I}}   \sum_{\ell=1}^k
(-1)^{\ell-1} z^{a^{j_1}\vee \cdots \vee a^{j_k}-a^{j_1}\vee \cdots
  \widehat {a^{j_\ell}} \cdots \vee a^{j_{k}}} e_{{\mathcal
    J}}^*\otimes e_{{\mathcal J}_\ell} + \varphi_k'
\end{equation}
where $\varphi_k'$ are the remaining terms that will not
contribute to the $e_{\I}^*$-entry. 
It follows that the coefficient of $e_{\I}^*$ in $d_\sigma\varphi$ equals 
\begin{multline}\label{kivas}
\sgn \big ((n,\ldots, 1)\big )\sum_\tau 
\sgn(\tau) 
\frac{\partial}{\partial z_{\sigma(1)}} 
z^{a^{i_{\tau(1)}}} dz_{\sigma(1)} \wedge 
\frac{\partial}{\partial z_{\sigma(2)}} z^{a^{i_{\tau(1)}}\vee
  a^{i_{\tau(2)}}-a^{i_{\tau(1)}}} dz_{\sigma(2)}\wedge \\
\cdots \wedge 
\frac{\partial}{\partial z_{\sigma(n-1)}} 
z^{a^{i_{\tau(1)}}\vee \cdots\vee a^{i_{\tau(n-1)}}-a^{i_{\tau(1)}}\vee
  \cdots\vee a^{i_{\tau(n-2)}}} dz_{\sigma(n-1)} 
\wedge \\
\frac{\partial}{\partial z_{\sigma(n)}}
z^{a^{i_{\tau(1)}}\vee \cdots\vee a^{i_{\tau(n)}}-a^{i_{\tau(1)}}\vee
  \cdots\vee a^{i_{\tau(n-1)}}} dz_{\sigma(n)} 
 =: \sum_\tau F_\tau, 
\end{multline}
where the sum is over all permutations $\tau$ of $\{1, \ldots, n\}$ 
and $\sgn(\tau)$ denotes the sign of the permutation $\tau$.

Let $\eta$ be the permutation of $\{1,\ldots, n\}$ associated with $\I$ as in Section
~\ref{strut} and let $\alpha$ be the label of $\I$. Then, by the
definition of $\eta$,  
$\big ( a^{i_{\tau(1)}}\vee \cdots\vee a^{i_{\tau(\kappa)}}\big
  )_\ell=\alpha_\ell$
precisely for $\ell=\eta^{-1}\big (\tau(1)\big), \ldots,
\eta^{-1}\big (\tau(\kappa)\big )$. 
It follows that 
\[z^{a^{i_{\tau(1)}}\vee \cdots\vee a^{i_{\tau(k)}}-a^{i_{\tau(1)}}\vee
  \cdots\vee a^{i_{\tau(k-1)}}}\] is a monomial in the variables
$z_{\eta^{-1}(\tau(k) )}, \ldots, z_{\eta^{-1}(\tau(n))}$. 
Therefore the last factor in $F_\tau$ vanishes unless
$\tau(n)=\eta\big (\sigma(n)\big )$. Given, $\tau(n)=\eta\big
(\sigma(n)\big )$, the next
to last factor
vanishes unless $\tau(n-1)=\eta\big (\sigma(n-1)\big)$, etc. To conclude,
$F_ \tau$, where $\tau={\eta\circ\sigma}$, is the only non-vanishing
term in \eqref{kivas}.

Now with $\tau={\eta\circ\sigma}$, 
\begin{multline}
F_\tau=
\sgn(\tau) \times 
a^{i_{\tau(1)}}_{\sigma(1)}\times \Big (a^{i_{\tau(1)}}\vee a^{i_{\tau(2)}}-a^{i_{\tau(1)}} \Big
)_{\sigma(2)}  \times \cdots \times \\
\Big ( a^{i_{\tau(1)}}\vee \cdots\vee a^{i_{\tau(n)}}-a^{i_{\tau(1)}}\vee
\cdots\vee a^{i_{\tau(n-1)}}\Big )_{\sigma(n)} 
 z^{a^{i_{\tau(1)}}\vee \cdots\vee
  a^{i_{\tau(n)}}} \frac{d z_{\sigma(n)}}{z_{\sigma(n)}}\wedge\cdots\wedge \frac{d
  z_{\sigma(1)}}{z_{\sigma(1)}}= \\
\sgn (\eta) \Vol (S_{\sigma, \alpha}) z^{\alpha-\1} dz_n\wedge\cdots\wedge dz_1, \end{multline}
where the last equality follows from Lemma ~\ref{ratblock}. This  
concludes the proof of Theorem ~\ref{main}, since, by definition
$\sgn(\alpha)=\sgn(\eta)$, see Section ~\ref{strut}.

\section{Examples}\label{exempel} 

Let us illustrate (the proof of) Theorem ~\ref{main} by some examples.

\begin{ex}\label{dimett}
Assume that $n=1$. Then each monomial ideal $M$ is a principal ideal
generated by a monomial $z^a$. The staircase of $M$ is
just the line segment $]0,a]\subset \R_{>0}$ with one outer corner
$\alpha=a$ so that $S_\alpha=S$. 
Moreover, the Scarf complex is just a point with label $z^a$, and thus
the Scarf resolution is just $0\to A\stackrel{z^a}{\longrightarrow}
A$. 
Thus in this case Theorem ~\ref{main} just reads 
\[
d\varphi = d (z^a) = \Vol (]0,a]) z^{a-1} dz = a  z^{a-1} dz. 
\]
\end{ex}

\begin{ex}\label{dimtva}
Assume that $n=2$. Then each Artinian monomial ideal $M\subset A_2$ is of the
form $M=(z_1^{a_1}z_2^{b_1}, \ldots, z_1^{a_r}z_2^{b_r})$ for some integers 
$a_1>\ldots >a_r=0$ and $0=b_1<\ldots <b_r$. 
Since no two minimal monomial generators have the same positive degree in any
variable, $M$ is trivially generic. 
In this case the staircase of $M$ looks like an actual staircase with
$r$ 
inner corners $(a_j,b_j)$ and $r-1$ outer corners $\alpha^j:=(a_j,
b_{j+1})$, 
see Figure ~5.1. In particular, $M=\bigcap_{j=1}^{r-1} (z_1^{a_j},
z_2^{b_{j+1}})$. 
\begin{figure}\label{trappor}
\begin{center}
\includegraphics{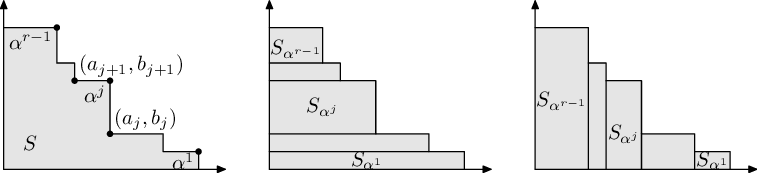}
\caption{The staircase $S$ of $M$ in Example ~\ref{dimtva} and the partitions $\{S_{\alpha^j}\}_j=\{S_{\sigma,\alpha^j}\}_j$ of $S$ corresponding to the
  permutations $\sigma=(1,2)$ and $\sigma=(2,1)$, respectively.}
\end{center}
\end{figure}
With $\sigma$ as the identity, $\T_{(a_j,b_j)}$
is just the line segment $\{x_1=a_j, b_j<x_2 \leq
b_{j+1}\}$.

Note that $\sigma=(1,2)$ corresponds to the ordering
$\alpha^1\geq_\sigma\ldots\geq_\sigma \alpha^{r-1}$ of the outer corners and 
\[
S_{(1,2), \alpha^j}=\{x\in \R^2_{>0} \mid 0 \leq x_1 < a_j, b_j \leq x_2 <
b_{j+1} \}, 
\]
whereas $\sigma=(2,1)$ corresponds to the reverse ordering of
the outer corners and so 
\[
S_{(2,1), \alpha^j}
= \{x\in\R^2_{>0}
\mid a_{j+1} \leq x_1 < a_j, 0 \leq x_2 < b_{j+1} \},
\]
see Figure ~5.1. 
Thus, the partitions just correspond to 
vertical and horisontal, respectively,  slicing of $S$.

In this case the Scarf complex is just a triangulation of the
one-dimensional simplex, and it is not very
hard to directly compute $d\varphi$, cf.\ \cite[Section~7]{LW}.  
\end{ex}

\begin{ex}\label{genex}
Let $M$ be the generic monomial ideal $M=(z_1^3, z_1^2z_2,
z_1z_2^2z_3^2, z_2^4, z_2^3z_3, z_3^3)\subset A_3$. The staircase $S$ of $M$, 
depicted in Figure ~5.2, has six inner corners, 
$a^1=(3,0,0)$, $a^2=(2,1,0)$, $a^3=(1,2,2)$, $a^4=(0,4,0)$,
$a^5=(0,3,1)$, and $a^6=(0,0,3)$, and 
five outer corners, 
$\alpha^1=(3,1,3)$, $\alpha^2=(2,4,1)$, $\alpha^3=(2,3,2)$,
$\alpha^4=(2,2,3)$, and $\alpha^5=(1,3,3)$. 
\begin{figure}\label{genfigur}
\begin{center}
\includegraphics{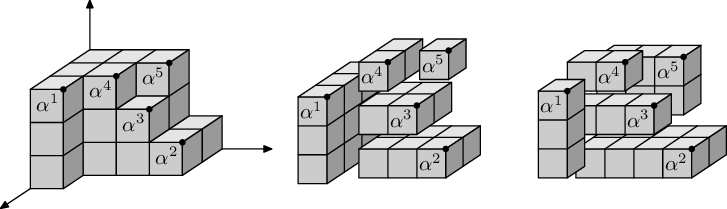}
\caption{The staircase of $M$ 
in Example ~\ref{genex} and the 
  partitions $\{S_{\alpha^j}\}_j$ corresponding to the
  orderings $\alpha^1,\alpha^2,\alpha^3,\alpha^4,\alpha^5$ and $\alpha^2,
\alpha^5, \alpha^3, \alpha^4, \alpha^1$, respectively.} 
\end{center}
\end{figure}
By Claim ~\ref{lila}, $\T_{a^j}$ is non-empty for $j=1,2,3$. These
two-dimensional staircases are the light grey regions facing the
reader in the first figure in Figure \ref{genfigur}. 

The six different permutations $\sigma$ of $\{1,2,3\}$ give rise to six different
orderings of the $\alpha^j$: for example
$\sigma^1:=(1,2,3)$ and $\sigma^2:=(2,3,1)$
correspond to the orderings
$\alpha^1\geq_\sigma\alpha^2\geq_\sigma\alpha^3\geq_\sigma\alpha^4\geq_\sigma\alpha^5$ and $\alpha^2\geq_\sigma
\alpha^5\geq_\sigma \alpha^3\geq_\sigma \alpha^4\geq_\sigma \alpha^1$, respectively. 
In the first case $S_{\sigma^1,\alpha^1}$ is the cuboid $]0,3]\times]0,1]\times]0,3]$, 
$S_{\sigma^1,\alpha^2}=]0,2]\times]1,4]\times]0,1]$,
$S_{\sigma^1,\alpha^3}=]0,2]\times]1,3]\times]1,2]$, 
$S_{\sigma^1,\alpha^4}=]0,2]\times]1,2]\times]2,3]$, and 
$S_{\sigma^1,\alpha^5}=]0,1]\times]2,3]\times]2,3]$, see Figure ~5.2,
where also the $S_{\sigma^2,\alpha^j}$ are depicted.

\end{ex}

\section{General (monomial) ideals}\label{general} 

The Scarf resolution is an instance of a more general construction of
so-called \emph{cellular resolutions} of monomial ideals, introduced
by Bayer-Sturmfels \cite{BaS}. 
The Scarf complex is then replaced by a more general oriented polyhedral
cell complex $X$, with vertices corresponding to and labeled by the  
generators of the monomial ideal $M$; as above a face $\gamma$ of $X$
is labeled by the least common multiple $m_\gamma$ of the vertices. 
Analogously to the Scarf complex, $X$ encodes a graded complex of free
$A$-modules: 
for $k=0,\ldots, \dim X+1$, let $E_k$ be a free
$A$-module 
of rank equal to the number of $(k-1)$-dimensional faces of
$X$ 
and let $\varphi_k:E_k\to E_{k-1}$ be defined by 
$\varphi_k: 
e_\gamma\mapsto \sum_{\delta\subset \gamma} \sgn(\delta,\gamma)~\frac{m_\gamma}{m_{\delta}}e_{\delta},$
where $\gamma$ and $\delta$ are faces of $X$ of dimension $k-1$ and
$k-2$, respectively, and where $\sgn(\delta,\gamma)=\pm 1$ comes from
the orientation of $X$. 
The complex $E_\bullet, \varphi_\bullet$ is exact if $X$ satisfies a
certain acyclicity 
condition see, e.g., \cite[Proposition~4.5]{MS}, and thus gives a
resolution - a so-called \emph{cellular resolution} - of the cokernel
of $\varphi_0$, which, with the
identification $E_{0}=A$ equals $A/M$. For more details we refer to
\cite{BaS} or \cite{MS}.

In \cite{BaS} was also introduced a certain canonical choice of $X$. 
Given $t\in\R$, let $\mathcal P_t=\mathcal P_t(M)$ 
be the convex hull in $\R^n$ of 
$\{(t^{\alpha_1},\ldots, t^{\alpha_n}) \mid z^{\alpha}\in M\}$. 
Then $\mathcal P_t$ is an unbounded 
polyhedron in $\R^n$ of dimension $n$ and the face poset of bounded
faces of $\mathcal P_t$  (i.e., the set of bounded faces
partially ordered by inclusion)  is independent of $t$ if
$t\gg 0$. 
The \emph{hull complex} of $M$ is the polyhedral cell
complex of all bounded faces of $\mathcal P_t$ for $t\gg 0$. 
The corresponding complex $E_\bullet,\varphi_\bullet$ is exact and
thus gives a resolution, the \emph{hull resolution}, of $A/M$. It is
in general not minimal, but it has length at most $n$. 
If $M$ is generic, however, the Hull complex coincides with the Scarf
complex; in particular, it is minimal. 

In \cite{LW} together with L\"ark\"ang we computed the residue current
$R$ associated with the hull resolution, or, more generally, any cellular resolution where the underlying polyhedral complex $X$ is a
polyhedral subdivision of the 
$(n-1)$-simplex, 
of an Artinian monomial ideal. 
Theorem ~5.1 in \cite{LW} states that the entries of $R$ 
are of the form \eqref{poor}, where 
the sum is now over all top-dimensional faces (with label $\alpha$) of
$X$ and $\sgn(\alpha)$ comes from the orientation of $X$.

Note that the definition of $S_{\sigma,\alpha}$ still makes sense when
$M$ is a 
general Artinian monomial ideal. However, in general the 
$S_{\sigma,\alpha}$ will not be cuboids as the following example
shows.

\begin{ex}\label{amsterdam}
Let $M=(z_1^2, z_1z_2, z_1z_3, z_2^2, z_3^2)\subset A_3$. Then $M$ is not
generic, since there is no generator that strictly divides $\lcm
(z_1z_2, z_1z_3)=z_1z_2z_3$. 
The staircase $S$ of $M$ is depicted in Figure ~6.1. 
\begin{figure}\label{hulltrappa}
\begin{center}
\includegraphics{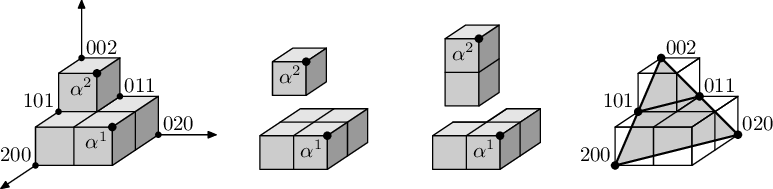}
\caption{The staircase of $M$ 
in Example ~\ref{amsterdam}, the 
  partitions $S_{\alpha^1}$ and $S_{\alpha^2}$ corresponding to the
  orderings $\alpha^1,\alpha^2$ and $\alpha^2,\alpha^1$, respectively,
  and the hull complex of $M$.}
\end{center}
\end{figure}
Note that $S$ has
two outer corners $\alpha^1=(2,2,1)$ and $\alpha^2=(1,1,2)$.

Assume that $\sigma$ is a permutation of $\{1,2,3\}$ such that $\sigma(1)$
equals $1$ or $2$. Then the 
lexicographical order of the outer corners is
$\alpha^1\geq_\sigma\alpha^2$. 
Otherwise, if $\sigma(1)=3$, the lexicographical order is reversed. 
In the first case $S_{\alpha^1}$ is the cuboid
$]0,2]\times]0,2]\times]0,1]$, and $S_{\alpha^2}$ is the cuboid
$]0,1]\times]0,1]\times]1,2]$, see Figure ~6.1. In the
second case $S_{\alpha^2}$ is the cuboid
$]0,1]\times]0,1]\times]0,2]$, whereas $S_{\alpha^1}$ is the set
$]0,2]\times]0,2]\times]0,1]\setminus ]0,1]\times]0,1]\times]0,1]$; in
particular $S_{\alpha^1}$ is not a cuboid.

In this case, the hull resolution is a minimal resolution of $A/M$. There are two top-dimensional faces in
the hull complex, with vertices $\{z_1^2, z_1z_2,
z_1z_3, z_2^2\}$ and $\{z_1z_2, z_1z_3, z_3^2\}$ and thus labels 
$z^{\alpha^1}$ and $z^{\alpha^2}$, respectively, see Figure ~6.1. 
A computation yields that if $\sigma(1)=3$, then the coefficient of
$\sgn (\alpha^1) z^{\alpha^1-\1} dz e^*_{\alpha^1}$ is $3=\Vol (S_{\sigma, \alpha^1})$ and
the coefficient of $\sgn (\alpha^2) z^{\alpha^2-\1} dz e^*_{\alpha^2}$ is
$2=\Vol (S_{\sigma, \alpha^2})$. Otherwise the coefficients are $4=\Vol (S_{\sigma, \alpha^1})$ and
$1=\Vol (S_{\sigma, \alpha^2})$, respectively. 
Thus in this case Theorem ~\ref{main} holds. 
\end{ex}

Example ~\ref{amsterdam} suggests that Theorem ~\ref{main} might hold
when $E_\bullet, \varphi_\bullet$ is the hull resolution of an Artinian 
monomial ideal and this resolution is minimal. However, we
do not know how to prove it in general. 
The proof in Section ~\ref{beviset} does not extend to this
situation. 
For example the staircases $\T_a$ constructed in Section
~\ref{ratblocksbevis} are not disjoint in general, cf.\ \eqref{nollbrutal}. Consider the inner
corners $a=(1,1,0)$ and $b=(1,0,1)$ of the staircase $S$ in Example
~\ref{amsterdam}. 
Then 
\[\T_a\cap \T_b =\{x\in \R^3 |x_1=1, 1<x_j \leq 2, j=2,3\}.\]
Also, the computation of $d\varphi$ is more involved in this
case. Indeed, in general it is not true that the coefficient of
$e_{\I}^*$ in $R$ just consists of one non-vanishing term as in
\eqref{kivas}.

Example ~\ref{motex} below shows
that Theorem ~\ref{main} does not hold for the hull resolution in
general if it is not minimal, 
and also
that it does not hold for arbitrary minimal resolutions of monomial
ideals. 
It would be interesting to look for an alternative description of the
coefficients of $d\varphi$ that extends to general (monomial)
resolutions.

\begin{ex}\label{motex}
Let $M=(z_1^3, z_1^2z_2^2, z_1z_3, z_2^3, z_2z_3, z_3^2)$. Then $M$ is
not generic; for example, as in Example ~\ref{amsterdam}, there is no
generator of $M$ that strictly divides $\lcm (z_1z_2,
z_1z_3)=z_1z_2z_3$. 
The staircase of $M$ has
three outer corners, $\alpha^1=(3,2,1)$, $\alpha^2=(2,3,1)$, 
and $\alpha^3=(1,1,2)$.

In this case the hull resolution is not minimal. The hull complex consists of four triangles; one
triangle $\alpha^j$ for
each outer corner and one extra triangle $\beta$ with vertices $\{z_1^2z_2^2,
z_1z_3, z_2z_3\}$ and thus label $z^\beta=z_1^2z_2^2z_3$, see Figure ~6.2. 
A computation yields that the coefficient of $\sgn(\alpha^3) z^{\alpha^3-\1}dz e^*_{\alpha^3}$ in
$d_\sigma\varphi $ equals $\Vol
(S_{{\sigma, \alpha^3}})$ for all permutations $\sigma$. The coefficient of
$\sgn(\alpha^1) z^{\alpha^1-\1}dz e^*_{\alpha^1}$ in $d_{(3,1,2)}\varphi$,
however, equals $4$, whereas $\Vol(S_{(3,1,2), \alpha^1})=5$. 
Thus Theorem ~\ref{main} does not hold in this case.

\smallskip 

One can create a minimal cellular resolution from the hull
complex, e.g., by removing the edge between $z_1^2z_2^2$ and
$z_1z_3$. The polyhedral cell complex $X$ so obtained has one
top-dimensional face for each outer corner $\alpha^j$ in $S$. The face
corresponding to $\alpha^1$ is the union of the two triangles $\alpha^1$
and $\beta$ in the hull
complex, see Figure ~6.2. 
\begin{figure}\label{hullkomplexena}
\begin{center}
\includegraphics{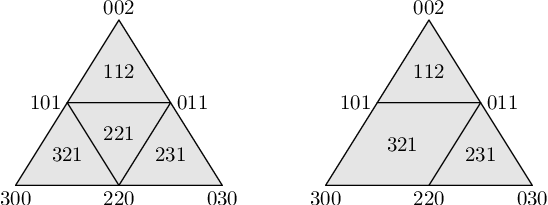}
\caption{The hull complex of the ideal $M$ in Example ~\ref{motex}
  (labels on vertices and $2$-faces) (left) and the minimal free resolution (right).}
\end{center}
\end{figure}
It turns out that the
coefficient of $\sgn(\alpha^1) z^{\alpha^1-\1}dz e^*_{\alpha^1}$ in
$d_\sigma\varphi$ is the sum of the coefficients of $\sgn(\alpha^1) 
z^{\alpha^1-\1}dz e^*_{\alpha^1}$ and
$\sgn(\beta) z^{\beta-\1}dz e^*_\beta$ for each $\sigma$, whereas the coefficients of $\sgn(\alpha^2)z^{\alpha^2-\1}dz e^*_{\alpha^2}$
and $\sgn(\alpha^3) z^{\alpha^3-\1}dz e_{\alpha^3}^*$ are the same as
above. 
Thus, as above, the coefficient of $\sgn(\alpha^2) z^{\alpha^2-\1}dz
e_{\alpha^2}^*$ in $d_{(3,2,1)}\varphi$ is different from 
$\Vol(S_{(3,1,2),\alpha^2})$, and so Theorem ~\ref{main} fails to hold
also in this case.

\end{ex}

Although Theorem ~\ref{main} fails to hold in Example ~\ref{motex},
Corollary ~\ref{foljd} still holds for both resolutions. In fact, the left hand side of
\eqref{fluff} is independent of $\sigma$ for all free resolutions of
Artinian ideals. We will present an argument of this communicated to us by Jan
Stevens, \cite{S}. 

Assume that 
\begin{equation}\label{inteens}
0\to E_n\stackrel{\varphi_n}{\longrightarrow}\ldots\stackrel{\varphi_2}{\longrightarrow} E_1\stackrel{\varphi_1}{\longrightarrow}
E_0\cong \Ok_0
\end{equation}
is a resolution of minimal length of an Artinian ideal
$\a\subset\Ok_0$ and let $R$ be the associated residue current as
constructed in \cite{AW}. 
Since $\a$ is Artinian, it follows from 
the construction that 
\begin{equation}\label{anden}
\varphi_n R=0, 
\end{equation}
see \cite[Proposition~2.2]{AW}. 
Moreover, $R$ satisfies that if 
$\psi$ is (a germ of ) a holomorphic function, then $\psi R=0$ if and only if $\psi \in
\a$, see \cite[Theorem~1.1]{AW}. In particular, 
\begin{equation}\label{borjan}
\varphi_1 \xi R=0
\end{equation} 
for any $\End(E_n,E_2)$-valued section $\xi$.

\begin{prop}\label{artinartin}
Assume that \eqref{inteens} is a resolution of an Artinian ideal
$\a\subset\Ok_0$ and that $z_1,\ldots, z_n$ are 
holomorphic coordinates at $0\in\C^n$. Let $\sigma$ be a permutation of
$\{1,\ldots, n\}$. 
Then \begin{equation*}
\frac{\partial \varphi_1}{\partial z_{\sigma(1)}}dz_{\sigma(1)}\wedge
\cdots \wedge \frac{\partial \varphi_n}{\partial
  z_{\sigma(n)}}dz_{\sigma(n)}\wedge R
\end{equation*} 
is independent of $\sigma$. 
\end{prop}

\begin{proof}
Since each permutation of $\{1,\ldots,n\}$ can be obtained as a
composition of permutations $\sigma$ of the form 
\begin{equation}\label{utan}
\sigma_j:\{1,\ldots, n\}\mapsto \{1,\ldots, j-1,j+1,j,j+2,\ldots, n\},
\end{equation} 
it suffices to prove that 
\begin{equation}\label{pink}
\frac{\partial \varphi_1}{\partial z_{1}}dz_{1}\wedge
\cdots \wedge \frac{\partial \varphi_n}{\partial
  z_{n}}dz_{n}\wedge R =
\frac{\partial \varphi_1}{\partial z_{\sigma_j(1)}}dz_{\sigma_j(1)}\wedge
\cdots \wedge \frac{\partial \varphi_n}{\partial
  z_{\sigma_j(n)}}dz_{\sigma_j(n)}\wedge R.
\end{equation}

Since 
$\varphi_j \varphi_{j+1}=0$, 
\begin{multline}\label{louder}
0=\frac{\partial^2 (\varphi_j \varphi_{j+1})}{\partial z_j\partial
  z_{j+1}}
=
\frac{\partial^2\varphi_j}{\partial z_j\partial
  z_{j+1}} \varphi_{j+1}
+
\frac{\partial\varphi_j}{\partial z_j} \frac{\partial\varphi_{j+1}}
{\partial z_{j+1}}
+
\frac{\partial\varphi_j}{\partial z_{j+1}} \frac{\partial\varphi_{j+1}}
{\partial z_{j}}
+
\varphi_j \frac{\partial^2\varphi_{j+1}}
{\partial z_{j}\partial z_{j+1}}. 
\end{multline} 
Let us compose \eqref{louder} from the left and the right by
\[ 
\frac{\partial\varphi_1}{\partial z_1}\cdots 
\frac{\partial\varphi_{j-1}}{\partial z_{j-1}}
\text{ and } 
\frac{\partial\varphi_{j+2}}{\partial z_{j+2}} \cdots 
\frac{\partial\varphi_{n}}{\partial z_{n}},
\]
respectively. 
Since $\varphi_k \varphi_{k+1}=0$ for each $k$, 
Leibniz's rule gives that 
\begin{equation*}\label{loud}
\frac{\partial\varphi_k}{\partial z_\ell} \varphi_{k+1} = -
\varphi_k\frac{\varphi_{k+1}}{\partial z_\ell},
\end{equation*}
cf.\ \eqref{louder}. 
Using this repeatedly for $k=j+1,\ldots, n-1$ we get that the 
term corresponding to the first term in the left 
hand side of \eqref{louder} equals 
\[
\pm\frac{\partial\varphi_1}{\partial z_1}\cdots 
\frac{\partial\varphi_{j-1}}{\partial z_{j-1}}  
\frac{\partial^2\varphi_j}{\partial z_j\partial
  z_{j+1}} 
\frac{\partial\varphi_{j+1}}{\partial z_{j+2}} \cdots 
\frac{\partial\varphi_{n-1}}{\partial z_{n}}\varphi_n.
\]
Similarly the term corresponding to the last term in the left hand
side of \eqref{louder} equals  
\[
\pm\varphi_1 
\frac{\partial\varphi_2}{\partial z_1} \cdots 
\frac{\partial\varphi_{j}}{\partial z_{j-1}}  
\frac{\partial^2\varphi_{j+1}}{\partial z_j\partial
  z_{j+1}} 
\frac{\partial\varphi_{j+2}}{\partial z_{j+2}}\cdots 
\frac{\partial\varphi_{n}}{\partial z_{n}}.
\]
Next let us compose from the right by $R$. 
Using \eqref{anden} and \eqref{borjan} we get 
\begin{equation*} 
0=\frac{\partial \varphi_1}{\partial z_1} 
\cdots \frac{\partial \varphi_n}{\partial
  z_n} R + 
\frac{\partial \varphi_1}{\partial z_1}\cdots
\frac{\partial\varphi_{j}}{\partial z_{j+1}}
\frac{\partial\varphi_{j+1}}{\partial z_j}\cdots
\frac{\partial \varphi_n}{\partial z_n} R =
\frac{\partial \varphi_1}{\partial z_1} 
\cdots \frac{\partial \varphi_n}{\partial
  z_n} R + 
\frac{\partial \varphi_1}{\partial z_{\sigma_j(1)}}\cdots
\frac{\partial \varphi_n}{\partial z_{\sigma_j(n)}} R.
\end{equation*} 
Combining this with $dz_1\wedge\cdots\wedge
dz_n=-dz_{\sigma_j(1)}\wedge\cdots \wedge dz_{\sigma_j(n)}$ 
we obtain \eqref{pink}. 
\end{proof}

\def\listing#1#2#3{{\sc #1}:\ {\it #2},\ #3.}

\end{document}